\crefname{lemma}{Lemma}{Lemmas}
\crefname{theorem}{Theorem}{Theorems}
\crefname{corollary}{Corollary}{Corollaries}
\crefname{proposition}{Proposition}{Propositions}
\crefname{figure}{Figure}{Figures}
\theoremstyle{plain}
\newtheorem{theorem}{Theorem}
\newtheorem{lemma}[theorem]{Lemma}
\newtheorem{corollary}[theorem]{Corollary}
\newcounter{proofcount}
\newtheorem{claim}{Claim}
\theoremstyle{definition}
\DeclareMathOperator{\dist}{dist}
\DeclareMathOperator{\ANdim}{ANdim}
\newcommand{\scr}[1]{\mathcal{#1}}
\newcommand{\ds}[1]{\mathbb{#1}}
\newcommand{\defn}[1]{\textcolor{Maroon}{\emph{#1}}}
\renewcommand{\geq}{\geqslant}
\renewcommand{\leq}{\leqslant}
\begin{document}

\title{An improved quasi-isometry between graphs of bounded cliquewidth and graphs of bounded treewidth}
\author{Marc Distel\,\footnotemark[2]}
\date{\today}

\footnotetext[2]{School of Mathematics, Monash University, Melbourne, Australia (\texttt{marc.distel@monash.edu}). Research supported by Australian Government Research Training Program Scholarship.}
\maketitle

\begin{abstract}
    Cliquewidth is a dense analogue of treewidth. It can be deduced from recent results by Hickingbotham [arXiv:2501.10840] and Nguyen, Scott, and Seymour [arXiv:2501.09839] that graphs of bounded cliquewidth are quasi-isometric to graphs of bounded treewidth. We improve on this by showing that graphs of cliquewidth $k$ admit a partition with `local, but dense' parts whose quotient has treewidth $k-1$. Specifically, each part is contained within the closed neighbourhood of some vertex. We use this to construct a $3$-quasi-isometry between graphs of cliquewidth $k$ and graphs of treewidth $k-1$. This is an improvement in both the quasi-isometry parameter and the treewidth. We also show that the bound on the treewidth is tight up to an additive constant. 
\end{abstract}

{\bf Keywords:} Cliquewidth, Treewidth, Quasi-isometry, Assouad--Nagata dimension

\setlength{\parindent}{0pt}
\setlength{\parskip}{10pt}

\section{Introduction}

    The treewidth of a graph is a measure of how far it is from a tree, and is especially  important in structural and algorithmic graph theory. It is only bounded on sparse graphs; that is, graphs with large average degree have large treewidth.

    Cliquewidth is an analogue of treewidth for dense graphs. Unlike treewidth, cliquewidth can be bounded on dense graphs. In fact, cliquewidth is only meaningfully distinct from treewidth for dense graphs, since \citet{Corneil2005} showed that if a graph excludes $K_{t,t}$ as a subgraph and has cliquewidth at most $k$, then it has treewidth at most $3k(t-1)-1$. For the reverse direction, \citet{Corneil2005} showed that any graph (sparse or dense) of treewidth at most $k$ has cliquewidth at most $3\cdot 2^{k-1}$ (see \citet{Courcelle2000B} for an earlier bound).

    This analogy can be furthered using the concept of `quasi-isometry'. Graphs are said to be `quasi-isometric' if they `look similar at a large scale'. Quasi-isometries have a parameter $c$ that controls just how strong the similarity is, with smaller values indicating a greater degree of similarity. This is denoted as a $c$-quasi-isometry. See \cref{secQI} for a formal definition.

    It can be concluded from the recent results of  \citet{Hickingbotham2025} and \citet{Nguyen2025} that 
    graphs of bounded cliquewidth are quasi-isometric to graphs of bounded treewidth. In particular, every graph of cliquewidth $k$ is $(4k+4)$-quasi-isometric to a graph of treewidth at most $6k$. This is obtained by using a different parameter called `sim-width', which is at most the clique-width (see \citep[Proposition~6.3]{Oum2006} and \citep[Lemma~2.2]{Kang2017}). Combined with \citep[Lemma~13]{Hickingbotham2025} and \citep[(1.2)]{Nguyen2025}, it can be deduced that each graph of cliquewidth $k$ is $f(k)$-quasi-isometric to a graph of treewidth at most $6k$. A slightly deeper dive into the proof of \citep[(1.2)]{Nguyen2025} (specifically, using \citep[(2.3)]{Nguyen2025} and the observation in the paragraph preceding it with $r=2$) reveals that $f(k)=4k+4$ suffices.

    We improve on this result in the following ways. (1) we provide a direct and algorithmic construction using just the definitions of cliquewidth and treewidth, (2) the quasi-isometry parameter is smaller and independent of $k$, and (3) the bound on the treewidth is smaller.

    \begin{theorem}
        \label{QIcwtw}
        For any integer $k\geq 1$, each graph of cliquewidth at most $k$ is $3$-quasi-isometric to a graph of treewidth at most $k-1$.
    \end{theorem}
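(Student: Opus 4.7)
The plan is to reduce the theorem to a structural claim: build a partition $\mathcal{P}$ of $V(G)$ such that (a) each part $P \in \mathcal{P}$ is contained in $N_G[v]$ for some $v \in V(G)$, and (b) the quotient graph $G/\mathcal{P}$ has treewidth at most $k-1$. Given such a $\mathcal{P}$, the natural quotient map $f\colon V(G) \to V(G/\mathcal{P})$ is a $3$-quasi-isometry: distances contract under $f$, while any quotient path of length $d$ lifts to a $G$-path of length at most $3d+2$, since each of the $d+1$ parts traversed has diameter at most $2$ (contributing at most $2(d+1)$ edges) and the $d$ inter-part edges contribute one each.

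To construct $\mathcal{P}$ from a $k$-expression for $G$, I would define parts via the \emph{last join} of each vertex. For each $v \in V(G)$, let $t^*(v)$ be the highest node of the $k$-expression tree that is a join $\eta_{i,j}$ in which $v$ participates (i.e., $v$'s current label at that node is $i$ or $j$); if no such join exists, $v$ is isolated in $G$ and forms a singleton part. Otherwise, $v$ belongs to the part $P_{t^*(v)}^{(i)}$ consisting of all vertices with last join $t^*(v)$ and the same side $i$ (where $i$ is $v$'s label at $t^*(v)$). Property (a) is immediate: any vertex $w$ of the opposite label $j$ present at $t^*(v)$ dominates $P_{t^*(v)}^{(i)}$, because the join introduces an edge from every label-$i$ vertex to $w$.

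For property (b), I would construct a tree-decomposition of $G/\mathcal{P}$ using the $k$-expression tree as its skeleton. At each node $t$, the bag $B_t$ contains, for each label $\ell$ in use at $t$, the part of a canonical label-$\ell$ vertex (whenever well-defined). The key observation is that two vertices sharing a label at $t$ have the same label at every ancestor of $t$, hence participate in identical strictly-future joins; therefore either (i) some strictly-future join exists for label $\ell$, in which case all label-$\ell$-at-$t$ vertices share a common $t^*$ and lie in a single part (the canonical one included in $B_t$), or (ii) no such future join exists, in which case these vertices' parts are ``finalized'' with past $t^*$'s (possibly distinct) that appear only in bags strictly below $t$. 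In either case $|B_t| \leq k$, giving treewidth at most $k-1$. Edge coverage holds at join nodes, where both sides contribute their common parts to the bag; and connectivity for a part $P$ whose vertices lie in multiple sub-trees merged at a disjoint-union node $t$ is preserved because $P$ is also represented in $B_t$ via some label whose canonical part at $t$ is $P$.

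The main obstacle will be the rigorous verification of all three tree-decomposition axioms across the four $k$-expression operations (create, disjoint union, relabel, join), especially handling the orphan case (ii) at disjoint-union nodes without breaking connectivity. This requires a careful case analysis tracking how labels and canonical parts propagate through each operation, together with the observation that orphan parts are confined to a single sub-tree of the $k$-expression and so do not need to be represented above.
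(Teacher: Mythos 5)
Your reduction is exactly the paper's: produce a partition $\mathcal{P}$ in which every part lies in some closed neighbourhood (so has weak diameter at most $2$) and $G/\mathcal{P}$ has treewidth at most $k-1$, then observe that the quotient map is a $3$-quasi-isometry (your $r \leq 3d+2$ calculation matches \cref{partQI}). Your partition is also exactly the paper's, just described non-recursively: the paper builds it by, at each \ref{OP3}, merging all parts of colour $i$ into one part and all parts of colour $j$ into another, and one can check that a vertex's final part is precisely the set of vertices sharing its label at its last join, which is your $P_{t^*(v)}^{(i)}$. Where you genuinely diverge is in the tree-decomposition. The paper's tree $T$ is \emph{not} the $k$-expression tree: it has a node per leaf and per disjoint-union operation only (relabel and join operations leave $T$ unchanged and only modify the bags/partition), and it maintains two invariants -- a designated ``rainbow'' bag containing one part of every used colour, and the property that for each used colour the bags containing a part of that colour induce a subtree -- to drive the induction. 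You instead use the full $k$-expression tree with bags of ``canonical parts per active label,'' which is an attractive, more static description. I believe this can be made to work (your key observation, that same-labelled vertices at a node share all future labels and hence future joins, is the crux, and it makes both the ``one part per label per bag'' bound $|B_t|\leq k$ and edge coverage at join nodes go through), and you correctly identify the connectivity verification at disjoint-union nodes as the delicate point.

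One concrete gap in the scheme as written: singleton parts $\{v\}$ for vertices $v$ that participate in no join are never placed in any bag. At $v$'s leaf node (and every node above it) $v$'s label $\ell$ falls into your case (ii) -- there is no future join -- so no canonical part is assigned for $\ell$, and since $v$ is the only member of its part, no other label ever points at $\{v\}$ either. This violates the nonemptiness half of \ref{TD1}. The fix is easy (e.g.\ always put $\{v\}$ into the bag at $v$'s creation leaf, or treat isolated vertices separately), but it needs saying. Relatedly, your dichotomy should be phrased as ``a join at or above $t$'' rather than ``strictly-future join'': at a join node $t = \eta_{i,j}$ itself, labels $i$ and $j$ must have well-defined canonical parts in $B_t$ even when $t$ is the last join for those labels, or else edge coverage for the edges introduced at $t$ can fail. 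With these two points tightened, the plan is sound and gives the same width bound $k-1$ as the paper.
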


    We also show that the bound on the treewidth in \cref{QIcwtw} is within an additive constant of being tight.

    \begin{theorem}
        \label{lowerBound}
        For every real number $c\geq 1$ and integer $k\geq 6$, there exists a graph of cliquewidth at most $k$ that is not $c$-quasi-isometric to any graph of treewidth less than $k-3$.
    \end{theorem}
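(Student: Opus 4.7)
My plan is to exhibit, for each $c\geq 1$ and each integer $k\geq 6$, an explicit graph $G=G(c,k)$ of cliquewidth at most $k$ that contains $K_{k-2}$ as a ``fat'' topological minor at a scale much larger than $c$, and then to invoke the general principle that fat minors descend through quasi-isometries. Since $K_{k-2}$ has treewidth $k-3$, this forces every $c$-quasi-isometric partner $H$ of $G$ to contain $K_{k-2}$ as a minor, and hence to satisfy $\operatorname{tw}(H)\geq k-3$, contradicting the assumption that $\operatorname{tw}(H)<k-3$.

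For the candidate I would take $G$ to be the $L$-subdivision of $K_{k-2}$, i.e.\ the graph obtained from $K_{k-2}$ by replacing each edge with an internally-disjoint path of length $L+1$, where $L=L(c)$ is chosen much larger than $c$ (for instance $L\geq 8c+8$). By construction the $k-2$ original vertices persist in $G$ and are pairwise at distance exactly $L+1$, so $K_{k-2}$ is realised as an $L$-fat topological minor with singleton branch sets. I would verify the cliquewidth bound by giving an explicit $k$-expression: dedicate $k-2$ of the labels permanently, one to each terminal of $K_{k-2}$, and keep the remaining two labels as an auxiliary pair; then process the $\binom{k-2}{2}$ subdivision paths in an order that lets the auxiliary labels be recycled, extending each path one vertex at a time using the two auxiliary labels and joining the endpoints to the two relevant terminals via ``add-all-edges'' operations on the terminal labels, before dumping the internal vertices back into a reusable slot when moving on to the next path.

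The proof would then be completed using the standard fat-minor preservation principle (implicit in \citet{Nguyen2025,Hickingbotham2025}): if $G$ contains $F$ as a $d$-fat minor and $d$ is at least a small linear function of the quasi-isometry parameter $c$, then any $c$-quasi-isometric partner $H$ of $G$ also contains $F$ as a minor, since the images of the branch sets remain pairwise far apart and connecting paths in $G$ lift to coarse paths in $H$ that can be straightened to genuine connecting paths. Choosing $L$ sufficiently large forces $K_{k-2}$ to survive as a minor of $H$, giving $\operatorname{tw}(H)\geq k-3$. I expect the main obstacle to be the explicit cliquewidth construction: a naive construction of the $L$-subdivision of $K_{k-2}$ uses more than $k$ labels, and the bookkeeping needed to recycle the two auxiliary labels across all $\binom{k-2}{2}$ subdivision paths, without accidentally adding stray edges between the ``dormant'' internal vertices of earlier paths and the terminals involved in later paths, is the crux of the argument. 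This recycling is also what determines the additive constant $3$ in the gap, and the hypothesis $k\geq 6$ presumably enters so that the $k-2\geq 4$ terminal labels together with the two auxiliary labels afford enough room for the construction.
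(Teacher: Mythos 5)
Your high-level plan (sufficiently subdivide $K_{k-2}$, bound its cliquewidth by $k$, then invoke fat-minor preservation under quasi-isometry and minor-monotonicity of treewidth) is exactly the approach the paper takes, and your self-diagnosis of the obstacle is also accurate. However, the obstacle is a genuine gap in your proposal, not just a bookkeeping chore, because the plan as stated cannot work.

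Specifically, you propose to ``dedicate $k-2$ of the labels permanently, one to each terminal,'' leaving exactly two auxiliary labels to be recycled. But after a subdivision path is finished, its internal vertices must be assigned \emph{some} label for the rest of the construction. They cannot stay on an auxiliary label, since the next path reuses those labels and the subsequent add-all-edges step would spuriously join them to new terminals. They cannot be moved onto a ``permanent'' terminal label $i$ either, because any later add-all-edges involving label $i$ (which is needed whenever a future path ends at terminal $i$) would again pick up all those dormant vertices. With $k-2$ truly permanent terminal labels plus two auxiliary labels there is simply no safe dump slot, so no processing order saves the construction as you describe it. The paper's fix is the missing idea: it does \emph{not} keep all terminal labels permanent. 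It first builds the spider consisting of all subdivision paths from one designated hub terminal (say terminal $n$, where $n=k-2$) to every other terminal, and then uses the hub's own colour $n$ as the permanent dump slot for dormant internal vertices. This is safe precisely because, once the spider is in place, no further edges are ever added involving colour $n$ — all remaining paths run between terminals $1,\dots,n-1$. Building that spider within the same colour budget is itself nontrivial and is handled by a separate lemma; overall this yields cliquewidth at most $n+2=k$, matching your target.

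Two smaller points. First, a linear subdivision length such as $L\geq 8c+8$ is not enough: tracking the distortion through \textup{(QI1)} when pulling back branch sets and connecting paths requires a quadratic threshold; the paper uses a $\geq(4c(c+1)-1)$-subdivision. Second, the fat-minor preservation step you invoke as ``standard'' does need to be proved in this setting (the paper includes a self-contained argument), but that part of your outline is conceptually sound and would go through with the right constants.
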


    We actually prove the following strengthening of \cref{QIcwtw}.

    \begin{theorem}
        \label{main}
        For any integer $k\geq 1$, each graph $G$ of cliquewidth at most $k$ admits a dominated partition $\scr{P}$ such that $G/\scr{P}$ has treewidth at most $k-1$.
    \end{theorem}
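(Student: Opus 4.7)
The plan is to proceed by induction on the structure of a $k$-expression defining $G$. For each subexpression producing a labeled graph $(H,\lambda\colon V(H)\to[k])$, I will construct a dominated partition $\scr{P}_H$ of $V(H)$ and a tree decomposition $(T_H,\{X_s\}_{s\in V(T_H)})$ of $H/\scr{P}_H$ of width at most $k-1$, together with a designated root bag $X_{r_H}$. The key invariant is that for each non-empty label class $L_i:=\lambda^{-1}(i)$ there is a distinguished part $P_i^H\in X_{r_H}$, and in fact $X_{r_H}=\{P_i^H:L_i\neq\emptyset\}$, so $|X_{r_H}|\leq k$. The base case $i(v)$ is immediate, with $\scr{P}_H=\{\{v\}\}$, a one-node tree decomposition, and $P_i^H=\{v\}$.

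For $\eta_{i,j}(H')$, I obtain $\scr{P}_H$ from $\scr{P}_{H'}$ by merging every part contained in $L_i^{H'}$ into a single part $P_i^H=L_i^{H'}$, and similarly forming $P_j^H=L_j^{H'}$; after the $\eta$-operation every vertex of $L_i^{H'}$ is adjacent to every vertex of $L_j^{H'}$, so $P_i^H$ is dominated by any vertex of $L_j^{H'}$ and vice versa. In the tree decomposition the merging identifies the merged parts into a single vertex of the quotient, and a new root bag $X_{r_H}$ attached above $X_{r_{H'}}$ covers the single new edge $P_i^H-P_j^H$. For $\rho_{i\to j}(H')$ the graph and partition are unchanged, and I attach a new root bag designating $P_j^H$ to be $P_j^{H'}$ when it exists and $P_i^{H'}$ otherwise. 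For $\oplus(H_1,H_2)$, I take $\scr{P}_H=\scr{P}_{H_1}\cup\scr{P}_{H_2}$ and attach a new root bag adjacent to both $X_{r_{H_1}}$ and $X_{r_{H_2}}$; for each shared label $i$ I arbitrarily designate $P_i^H=P_i^{H_1}$, so that $P_i^{H_2}$ becomes a non-designated part which continues to appear in the bags of $T_{H_2}$.

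The main obstacle is verifying that an $\eta$-step following one or more $\oplus$-steps with shared labels still yields a valid tree decomposition of bag-size at most $k$. The critical observation is that each non-designated part $P_i^{H_2}$ still lies in $X_{r_{H_2}}$, which is adjacent in the tree to the $\oplus$-bag containing the designated $P_i^{H_1}$. When a later $\eta_{i,m}$ identifies all parts of $L_i$ into a single merged part, the union of their bag-subtrees is connected through each $\oplus$-bag, and no new entries are added to any existing bag, so the bound on bag-size is preserved. The remainder of the proof consists in carefully verifying this invariant through each operation and confirming that every merged part is dominated in $G$ by a vertex on the opposite side of the $\eta$-step that formed it, which remains adjacent to the whole merged part in $G$ since edges only accumulate as the expression is applied.
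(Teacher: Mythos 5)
Your proposal takes essentially the same route as the paper: induction on the expression, merging all parts of colours $i$ and $j$ at an $\eta_{i,j}$-step (each merged class is dominated by any vertex on the opposite side of the join, and stays dominated since the graph only gains edges), while maintaining a root bag with one part per used colour and the invariant that, for each colour, the bags containing a part of that colour form a connected subtree of the decomposition tree. The only cosmetic differences are that the paper adds a fresh bag only at $\oplus$ rather than after every operation, and that you should skip the merge when $\eta_{i,j}$ has an empty side, since there would then be no vertex available to dominate the merged class.
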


    Here, a dominated partition is one where each part is within the closed neighbourhood of some vertex. Hence, each part is contained within a subgraph of small radius. We consider such partitions for dense graphs to be analogous to bounded width partitions for sparse graphs.

    \cref{main} directly implies \cref{QIcwtw}; see \cref{secQI} for the (brief) proof.

    We end this paper with an application of \cref{QIcwtw} to a parameter called `Assouad--Nagata dimension', which describes the `large-scale shape' of a graph. See \cref{secANdim} for a formal definition. This application does not use the improvements given by \cref{QIcwtw}; in fact it can be derived just from the work of \citet{Hickingbotham2025} and \citet{Nguyen2025}. However, it was not observed in either of these works (with \citet{Hickingbotham2025} instead observing an application to the related, but weaker, parameter `asymptotic dimension'), and is interesting, so we think it appropriate to include here. The improved quasi-isometry also leads to an improved bound within the definition of Assouad--Nagata dimension (see `control functions'), which may be useful.

    The following was shown independently by \citet{Distel2023proper} and \citet{Liu2025}.

    \begin{lemma}[\citet{Distel2023proper,Liu2025}]
        \label{ANdimTw}
        For any integer $k\geq 0$, the class of all graphs of treewidth at most $k$ has Assouad--Nagata dimension 1.
    \end{lemma}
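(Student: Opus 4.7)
The plan is to exhibit, for every graph $G$ of treewidth at most $k$ and every scale $s \geq 1$, a partition of $V(G)$ into parts of weak diameter at most $f(k)\cdot s$ such that every ball of radius $s$ in $G$ meets at most two parts. This establishes Assouad--Nagata dimension $\leq 1$ with a linear control function.

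I would first handle trees ($k=1$) by a classical layering argument: root a tree $T$ at any $r$, set $d(v) = d_T(r,v)$, and colour $v$ by $\lfloor d(v)/(2s)\rfloor \bmod 2 \in \{0,1\}$. Take as the parts the connected components in $T$ of each colour class. Since the least common ancestor of any two vertices in a common part lies in the same part, every part has diameter at most $4s$; a ball of radius $s/2$ spans at most two consecutive depth bands of width $2s$, so it meets at most two parts. This settles the case of forests.

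For general $k$, I would run the analogous construction in $G$ itself: pick $r \in V(G)$, define BFS layers $L_i = \{v : d_G(r,v) \in [2is, 2(i+1)s)\}$, 2-colour them by parity, and partition $V(G)$ by the connected components (in $G$) of each colour class. The multiplicity-$2$ property at scale $s/2$ is immediate, so the entire content of the proof reduces to the claim that each component $C$ has weak $G$-diameter at most $f(k)\cdot s$ for some function $f$ depending only on $k$. This is the main obstacle, and the place where the treewidth-$k$ assumption is used: a priori, two vertices of $C$ could be very far in $G$, because short $G$-paths between them might exit the $2s$-wide BFS band, while the long in-band path witnessing connectedness of $C$ gives no useful upper bound.

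To prove the weak-diameter bound I would fix a tree decomposition $(T,\{B_t\})$ of $G$ of width $\leq k$ and exploit the fact that each bag is a separator of $G$ of size at most $k+1$. Any $G$-path between two vertices of $C$ that strays outside the band must re-cross such a separator, and can therefore be shortcut through separator vertices that themselves lie inside (or within $O(k)$ layers of) the band. Iterating this, combined with an induction on $k$ that recurses on subgraphs of treewidth $<k$ obtained by deleting a balanced separator extracted from the tree decomposition, should yield the required bound. The delicate step is to control the accumulation of shortcut length across the iteration so that the linear dependence on $s$ is preserved and only the constant $f(k)$ grows with $k$; this is essentially the content of the arguments of \citet{Distel2023proper} and \citet{Liu2025}.
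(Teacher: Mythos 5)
The paper does not prove \cref{ANdimTw}; it cites \citet{Distel2023proper} and \citet{Liu2025} and never gives its own argument. So your sketch cannot be compared to a proof in this paper, only assessed on its own.

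There is a genuine gap, and it is in the step you dismiss as ``immediate,'' not the one you flag as delicate. Taking connected components of BFS colour classes does \emph{not} give $r$-disjoint collections (equivalently, bounded multiplicity of your cover), even for trees. Concretely, let $T$ consist of a path $r=v_0,v_1,\dots,v_{2s-1}$ followed by $n$ pendant vertices $w_1,\dots,w_n$ attached to $v_{2s-1}$. With your colouring, $v_{2s-1}$ is the last vertex of colour $0$ and each $w_i$ lies in the colour-$1$ band $[2s,4s)$; since the only $T$-path between $w_i$ and $w_j$ passes through $v_{2s-1}$, each $w_i$ is its \emph{own} component of the colour-$1$ class. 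These $n$ parts are pairwise at distance $2$, so the colour-$1$ collection is not $r$-disjoint for any $r>2$, and the ball $B(v_{2s-1},1)\subseteq B(v_{2s-1},s/2)$ already meets $n+1$ parts. The LCA argument you give correctly bounds the \emph{diameter} of a part, but the multiplicity/disjointness bound fails precisely because two distinct components in the same band can branch off a common ancestor sitting just above that band, and unbounded degree makes this arbitrarily bad. Any correct construction has to prevent this clustering near band boundaries, which is where the real work in the cited proofs lies; naive BFS layering plus components is not enough. Consequently, the general-$k$ part of your sketch inherits the same unaddressed multiplicity problem on top of the (genuinely hard, and here only gestured at) weak-diameter bound.
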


    See \citet{Benjamini2012} for an implicit proof of \cref{ANdimTw} but with a bounded degree restriction. See also \citet{Bonamy2024} for a proof of \cref{ANdimTw}, but with `Assouad-Nagata dimension' replaced by `asymptotic dimension'. We remark that the Assouad--Nagata dimension is at most the asymptotic dimension.

    We use \cref{ANdimTw} and \cref{QIcwtw} to prove the following.

    \begin{theorem}
        \label{ANdimCw}
        For any integer $k\geq 3$, the class of all graphs of cliquewidth at most $k$ has Assouad--Nagata dimension 1.
    \end{theorem}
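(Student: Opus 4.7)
The plan is to combine Theorem~\ref{QIcwtw}, Lemma~\ref{ANdimTw}, and the quasi-isometry invariance of Assouad--Nagata dimension.

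First, by Theorem~\ref{QIcwtw}, every graph $G$ of cliquewidth at most $k$ is $3$-quasi-isometric to some graph $H_G$ of treewidth at most $k-1$. Since $k\geq 3$, the hypothesis $k-1\geq 1$ of Lemma~\ref{ANdimTw} is comfortably satisfied, so the class of graphs of treewidth at most $k-1$ has Assouad--Nagata dimension at most $1$; in particular, it admits a common control function $f_k$ witnessing this. Crucially, the QI parameter ($3$) is a universal constant, independent of both $G$ and $k$, and the control function $f_k$ depends only on $k$.

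Next I would invoke the standard quantitative fact that Assouad--Nagata dimension is a quasi-isometry invariant: if $G$ is $c$-quasi-isometric to $H$ and $H$ has AN-dimension at most $n$ with control function $f$, then $G$ has AN-dimension at most $n$ with a control function $f'$ depending only on $f$ and $c$. I would either cite this from the standard coarse-geometry literature or give a short direct verification by pulling back the defining partitions of $H_G$ through the quasi-isometry and adjusting the scale parameters. Applying this graph-by-graph across the class produces a single control function, depending only on $k$, certifying that the class of graphs of cliquewidth at most $k$ has AN-dimension at most $1$.

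Finally, to rule out AN-dimension $0$ (which would force the graphs in the class to have uniformly bounded diameter), note that for $k\geq 3$ the class contains all paths, since $P_n$ has cliquewidth at most $3$. These have unbounded diameter, so the class has AN-dimension at least $1$, and therefore exactly $1$.

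The main obstacle is bookkeeping: we must ensure that the uniform control function $f_k$ supplied by Lemma~\ref{ANdimTw} genuinely pulls back to a uniform control function for the cliquewidth class, rather than one that secretly depends on the particular graph~$G$. This reduces to the routine observation that $3$-quasi-isometries distort scales by a bounded additive and multiplicative amount, so pulling back $f_k$ yields the same function up to a reparametrisation depending only on $3$. A pleasant feature of the argument is that the improved QI parameter from Theorem~\ref{QIcwtw} (namely $3$, independent of $k$) translates directly into a control function whose dependence on $k$ comes only from $f_k$.
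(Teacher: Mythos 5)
Your argument is essentially the paper's own proof: the paper packages the ``pull back the cover through the quasi-isometry'' step as \cref{QIANdim} (stated for quasi-isometric embeddings, which a $3$-quasi-isometry is, and proved directly rather than cited), and then combines it with \cref{QIcwtw} and \cref{ANdimTw} for the upper bound and with the unbounded-diameter-of-paths observation plus \cref{cwPathRed} for the lower bound, exactly as you describe.
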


    \section{Initial Definitions}

    A \defn{graph} $G$ consists of a vertex set $V(G)$ and an edge set $E(G)\subseteq \binom{V(G)}{2}$. We exclude parallel edges and loops. A graph $H$ is \defn{subgraph} of $G$, denoted \defn{$H\subseteq G$}, if $V(H)\subseteq V(G)$ and $E(H)\subseteq E(G)$. A subgraph $H$ is \defn{proper} if $H\neq G$. The subgraph \defn{induced} by $S\subseteq V(G)$, denoted \defn{$G[S]$}, is the graph with vertex set $V(G)\cap S$ and every edge $uv\in E(G)$ with $u,v\in V(G)\cap S$.

    The \defn{closed neighbourhood} of a vertex $v$ in a graph $G$, $N_G[v]$, consists the set of vertices $u\in V(G)$ such that either $u=v$ or $u$ is adjacent to $v$ in $G$. A set $S\subseteq V(G)$ is \defn{dominated} in $G$ if $S\subseteq N_G[v]$ for some $v\in V(G)$.

    A \defn{partition} $\scr{P}$ of a graph $G$ is a collection of nonempty pairwise disjoint subsets of $V(G)$ such that $\bigcup_{P\in \scr{P}} P=V(G)$. The subsets $P\in \scr{P}$ are called the \defn{parts} of $\scr{P}$. Note that we do not require that parts induce connected subgraphs of $G$. The \defn{quotient} of $G$ with respect to $\scr{P}$, denoted \defn{$G/\scr{P}$}, is the graph with vertex set $\scr{P}$ and edges between parts whenever they are adjacent in $G$. We say that $\scr{P}$ is dominated in $G$ if every part is dominated in $G$. 

    Given a graph $G$ and a set $C$, a \defn{colouring} of $G$ with \defn{colour-set} $C$ is a map $c:V(G)\mapsto C$. By convention, we usually take $C:=\{1,\dots,|C|\}$.  Say each vertex $v\in V(G)$ is \defn{coloured} $c(v)$. Say that a colour $i\in C$ is \defn{used} by $c$ if $c^{-1}(i)\neq\emptyset$. A set $S\subseteq V(G)$ is \defn{monochromatic} under $c$, or \defn{$c$-monochromatic}, if all vertices of $S$ are coloured the same colour under $c$. Say that $S$ is \defn{rainbow} under $c$ if, for each $i\in C$ that is used by $c$, there exists $v\in S$ coloured $i$ under $c$. Say that a partition $\scr{P}$ of $G$ is \defn{monochromatic} under $c$ if each part in $\scr{P}$ is monochromatic under $c$. Note that if $\scr{P}$ is monochromatic, then there is an induced colouring on the quotient $G/\scr{P}$, which we denote \defn{$c_{\scr{P}}$}, and it makes sense to talk about the colour of a part under $c$.

    Let $G,H$ be graphs with $V(G)\cap V(H)=\emptyset$. The \defn{disjoint union} of $G$ and $H$ is the graph with vertex set $V(G)\cup V(H)$ and edge set $E(G)\cup E(H)$. Given colourings $c_G,c_H$ of $G$ and $H$ respectively on the same set of colours, define a colouring \defn{$c_G\cup c_H$} of $G\cup H$ as follows. For each $v\in V(G\cup H)$, let $X$ be the unique element of $\{G,H\}$ such that $v\in V(X)$, and let $c_G\cup c_H(v):=c_X(v)$.

    A \defn{tree decomposition} of a graph $G$ is a tree $T$ combined with a collection $(B_t:t\in V(T))$ such that:

    \begin{enumerate}[label=(TD\arabic*)]
        \item \label{TD1}for each $v\in V(G)$, the vertices $t\in V(T)$ such that $v\in B_t$ induce a nonempty subtree of $T$, and
        \item \label{TD2}for each $uv\in E(G)$, there exists $t\in V(T)$ such that $u,v\in B_t$
    \end{enumerate}

    The sets $B_t$, $t\in V(T)$, are called \defn{bags}. The \defn{width} of a tree-decomposition is the maximum size of a bag minus 1. The \defn{treewidth} of $G$ is the minimum width of a tree-decomposition of $G$.

    We now move towards defining `cliquewidth'. We preface by saying that there are some slight technical differences between our definition and that of other papers. We do this to simplify technical aspects of the proof.

    Let $G$ be a graph, let $k\geq 1$ be an integer, and let $c$ be a colouring of $G$ with colours $\{1,\dots,k\}$. We say that $(G,c)$ is a \defn{cliquewidth-$k$ pair} if either $|V(G)|\leq 1$, or if at least one of the following is true.

    \begin{enumerate}[label=(OP\arabic*)]
        \item \label{OP1}there exist cliquewidth-$k$ pairs $(G_1,c_1),(G_2,c_2)$ such that $G_1$ and $G_2$ are nonempty, $G$ is the disjoint union of $G_1$ and $G_2$, and $c=c_1\cup c_2$, or,
        \item \label{OP2}there exist a colouring $c'$ of $G$ and distinct $i,j\in \{1,\dots,k\}$ such that $(G,c')$ is a cliquewidth-$k$ pair, $i,j$ are used by $c'$, and $c$ is obtained from $c'$ by changing the colour of every vertex coloured $i$ under $c'$ to $j$, or,
        \item \label{OP3}there exist a proper subgraph $G'$ of $G$ and distinct $i,j\in \{1,\dots,k\}$ such that $(G',c)$ is a cliquewidth-$k$ pair, and $G$ is obtained from $G'$ by adding an edge between each pair of (non-adjacent) vertices $u,v$ with $u$ coloured $i$ and $v$ coloured $j$.
    \end{enumerate}

    We can consider the prior list as a list of `operations' to `build' cliquewidth-$k$ pairs.
    
    It is easy to check that cliquewidth-$k$ pairs are preserved under relabellings of vertices and permutations of the colours. The latter point means that the requirement in \ref{OP2} that $j$ is used by $c'$ could be removed without changing what pairs are cliquewidth-$k$ pairs. We include this requirement because it (along with the  `nonempty' condition in \ref{OP1} and `proper' condition in \ref{OP3}) prevents an infinite loop of operations. Because of this, it makes sense to perform induction on the number of operations used to form a cliquewidth-$k$ pair.

    The \defn{cliquewidth} of a graph $G$ is the minimum $k$ such that $G$ admits a colouring $c$ on the colours $\{1,\dots,k\}$ for which $(G,c)$ is a cliquewidth-$k$ pair.

    \section{Proof of \cref{main}}
    
    \cref{main} is an immediate corollary of the following lemma.

    \begin{lemma}
        For any integer $k\geq 1$ and any cliquewidth-$k$ pair $(G,c)$, $G$ admits a $c$-monochromatic dominated partition $\scr{P}$ and a tree-decomposition $(B_t:t\in V(T))$ of $G/\scr{P}$ of width at most $k-1$ such that:
        \begin{enumerate}[label=(\arabic*)]
            \item \label{strong1}there exists $q\in V(T)$ such that $B_q$ is rainbow under $c_{\scr{P}}$, and,
            \item \label{strong2}for each colour $i\in \{1,\dots,k\}$ that is used by $c$, the set of vertices $t\in V(T)$ such that $B_t$ contains a part coloured $i$ under $c$ induces a nonempty subtree of $T$.
        \end{enumerate}
    \end{lemma}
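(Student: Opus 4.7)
My plan is to prove this by induction on the number of operations used to build the cliquewidth-$k$ pair $(G,c)$. The base case is $|V(G)|\leq 1$: for the singleton case take $\scr{P}:=\{V(G)\}$ and let $T$ be a single node $q$ with $B_q:=\scr{P}$, which is trivially dominated, rainbow, and has a valid subtree structure; the empty graph is handled similarly with an empty bag. For the inductive step, I split into three cases according to which of \ref{OP1}, \ref{OP2}, \ref{OP3} was applied last, and in each case modify the inductive partition and tree decomposition appropriately.

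Case \ref{OP2} is the easiest: if $c$ arises from $c'$ by recolouring $i$ to $j$, I keep the inductive partition $\scr{P}'$ and tree decomposition for $(G,c')$ verbatim. The graph is unchanged so the partition is still dominated; monochromaticity is preserved by merging colour classes. The old rainbow bag $B'_{q'}$ stays rainbow under $c_\scr{P}$, since the colours used by $c$ are exactly those used by $c'$ minus $\{i\}$. The only condition (2) concern is the new colour $j$, whose bags are the union of the $c'$-colour-$i$ subtree and the $c'$-colour-$j$ subtree; both contain $q'$ (as $B'_{q'}$ was rainbow), so their union is again a subtree.

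Case \ref{OP3} is where I plan to modify the partition. Let $P_i^{\text{new}}$ be the union of all $c$-colour-$i$ parts in $\scr{P}'$, and $P_j^{\text{new}}$ the analogous union for colour $j$; all other parts remain unchanged. The added edges in \ref{OP3} make every colour $j$ vertex dominate every colour $i$ vertex in $G$, so both merged parts are dominated in $G$ (assuming both colours are used; otherwise \ref{OP3} adds nothing). The tree decomposition is obtained by substituting $P_i^{\text{new}}$ for any colour $i$ part and $P_j^{\text{new}}$ for any colour $j$ part inside each bag of the inductive decomposition, which can only shrink bag sizes. Condition (2) on $T'$ for colours $i$ and $j$ guarantees TD1 for the merged parts. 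The new cross-colour edges in $G/\scr{P}$ reduce to the single edge $P_i^{\text{new}}P_j^{\text{new}}$, which is covered by the modified rainbow bag at $q'$.

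The main obstacle is \ref{OP1}, where I must amalgamate two tree decompositions while simultaneously producing one rainbow bag for the combined colouring and preserving condition (2) across shared colours. My plan is to take $\scr{P}:=\scr{P}_1\cup\scr{P}_2$ and form $T$ from the disjoint union $T_1\cup T_2$ by adding a new node $q$ adjacent to both $q_1$ and $q_2$. The bag $B_q$ is built by picking, for each colour $l$ used in $c$, one representative part of colour $l$: from $B^1_{q_1}$ if $l$ is used in $c_1$, otherwise from $B^2_{q_2}$. This gives $|B_q|\leq k$ and makes $B_q$ rainbow under $c_\scr{P}$. TD1 holds because each part added to $B_q$ already lives in an adjacent rainbow bag in its original tree; TD2 is immediate since no edges of $G/\scr{P}$ cross between the two sides. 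The delicate point is condition (2) for a colour $l$ used in both $c_1$ and $c_2$: the colour $l$ subtrees in $T_1$ and $T_2$ each contain their respective rainbow vertex by the inductive condition (1), and the new node $q$ carries a colour $l$ part and is adjacent to both $q_1$ and $q_2$, stitching the two subtrees together into a single subtree in $T$.
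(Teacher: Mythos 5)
Your proof follows essentially the same inductive strategy as the paper, case by case on the last operation: keep the decomposition verbatim for \ref{OP2}, merge colour-$i$ and colour-$j$ parts and substitute within bags for \ref{OP3}, and glue the two trees via a new node adjacent to $q_1$ and $q_2$ with a rainbow bag built from $B^1_{q_1}\cup B^2_{q_2}$ for \ref{OP1}. The details (the explicit choice of one representative per colour for $B_q$, the handling of the empty graph) are only cosmetically different from the paper's, and the delicate step you flagged — stitching colour-$\ell$ subtrees through $q$ in case \ref{OP1} — is exactly the argument the paper gives.
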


    \begin{proof}
        We perform induction on the number of operations needed to build $(G,c)$. If none of \ref{OP1}-\ref{OP3} are required, observe that $|V(G)|\leq 1$. In this case, we take $\scr{P}:=\{V(G)\}$, which is trivially dominated in $G$ and $c$-monochromatic. Take $T$ be a tree on a single vertex $t$, and $B_t:=\{V(G)\}$. So $(B_t:t\in V(T))$ is trivially a tree-decomposition of $G/\scr{P}$. Further, $B_t$ is rainbow under $c_{\scr{P}}$, and \ref{strong2} is trivially satisfied.
        
        So we may proceed to the inductive step, where at least one operation was used, and in particular, there is a `last' operation used. Let $(G,c)$ be a cliquewidth-$k$ pair, and assume the lemma is true for cliquewidth-$k$ pairs that can be built with fewer operations. We break into cases depending on what the last operation used to obtain $(G,c)$ is.

        Case \ref{OP1}:

        So there exist cliquewidth-$k$ pairs $(G_1,c_1)$, $(G_2,c_2)$ constructable in fewer operations such that $G_1$ and $G_2$ are nonempty, $G$ is the disjoint union of $G_1$ and $G_2$, and $c=c_1\cup c_2$. As $(G_1,c_1)$ and $(G_2,c_2)$ can both be built with fewer operations, induction applies. For $i\in \{1,2\}$, let $\scr{P}_i$ and $(B^i_t:t\in V(T_i))$ be the partition of $G_i$ and tree-decomposition of $G_i/\scr{P}_i$ respectively obtained via induction. So $\scr{P}_i$ is dominated in $G_i$, is $c_i$-monochromatic, and $(B^i_t:t\in V(T_i))$ has width at most $k-1$ and obeys \ref{strong1} and \ref{strong2}.

        Let $\scr{P}:=\scr{P}_1\cup \scr{P}_2$. So $\scr{P}$ is a partition of $G$, and further it is dominated in $G$ and $c$-monochromatic since, for each $i\in \{1,2\}$, $\scr{P}_i$ is dominated in $G_i$ and is $c_i$-monochromatic. So it remains to find a tree-decomposition of $G/\scr{P}$ that satisfies \ref{strong1} and \ref{strong2}.

        For each $i\in \{1,2\}$, let $q_i\in V(T_i)$ be the vertex that satisfies \ref{strong1}. Let $T$ be the graph obtained from the disjoint union of $T_1$ and $T_2$ by adding a new vertex adjacent to $q_1$ and $q_2$. So $T$ is a tree.
        
        Observe that for each colour $i\in \{1,\dots,k\}$ that is used by $c$, $i$ is used by least one of $c_1$ or $c_2$. Thus, and by definition of $q_1,q_2$, $B^1_{q_1}\cup B^2_{q_2}$ is rainbow under $c_{\scr{P}}$
        Let $B_q$ be a minimal subset of $B^1_{q_1}\cup B^2_{q_2}$ that is rainbow. Note that no two parts in $B_q$ will share a colour, and thus $|B_q|\leq k$. Now, for each $t\in V(T)\setminus \{q\}$, let $B_t:=B^i_t$ for the $i\in \{1,2\}$ such that $t\in V(T_i)$.

        We now show that $(B_t:t\in V(T))$ is a tree-decomposition of $G/\scr{P}$. Note that, subject to this being true, \ref{strong1} follows immediately by definition of $B_q$, and that the width is at most $k-1$ since $|B_q|\leq k$ and since, for each $i\in \{1,2\}$, $(B^i_t:t\in V(T_i))$ has width at most $k-1$. 
        
        \ref{TD2} follows immediately by induction and since $E(G)=E(G_1)\cup E(G_2)$. For \ref{TD1}, observe that for any $i\in \{1,2\}$ and any $P\in \scr{P}_i$, the set $S_P$ of vertices $t\in V(T)$ such that $P\in B_t$ is precisely the set of vertices of $t\in T_i$ such that $P\in B^i_t$, plus possibly $q$. Additionally, observe that $q\in S_P$ only if $q_i\in S_P$. Thus and by induction, $T[S_P]$ is connected and nonempty, as desired. Since $\scr{P}=\scr{P}_1\cup \scr{P}_2$, \ref{TD2} follows. Hence, $(B_t:t\in V(T))$ is a tree-decomposition of $G/\scr{P}$. It remains only to show \ref{strong2}.
        
        For each $i\in \{1,\dots,k\}$, and each $j\in \{1,2\}$, let $S^j_i$ be the set of vertices $t\in V(T_j)$ such that $B^j_t$ contains a part coloured $i$. Observe that either $i$ is not used by $c_j$, in which case $S^j_i$ is empty, or $S^j_i$ induces a connected subgraph of $T_j$ (by \ref{strong2}) and $q_j\in S^j_i$. Let $S_i$ be the set of vertices $t\in V(T)$ such that $B_t$ contains a part coloured $i$. Observe that if $i$ is used by $c$, then $S_i=S_i^1\cup S_i^2\cup \{q\}$. By previous observations and since $q$ is adjacent to $q_1$ and $q_2$, it follows that $T[S_i]$ is connected. So \ref{strong2}, and the lemma, hold.

        Case \ref{OP2}:

        So there exist a cliquewidth-$k$ pair $(G,c')$ constructable with fewer operations and distinct $i,j\in \{1,\dots,k\}$, such that $i,j$ are used by $c'$ and $c$ is obtained from $c'$ by changing the colour of every vertex of colour $i$ to $j$. As $(G,c')$ can be built with fewer operations, induction applies. Let $\scr{P}$ and $(B_t:t\in V(T))$ be the partition of $G$ and tree-decomposition of $G/\scr{P}$ respectively obtained via induction. So $\scr{P}$ is dominated in $G$, is $c'$-monochromatic, and $(B_t:t\in V(T))$ has width at most $k-1$ and obeys \ref{strong1} and \ref{strong2} with respect to $c'$. 
        
        Observe that $\scr{P}$ is also $c$-monochromatic. Thus, we need only show that $(B_t:t\in V(T))$ obeys \ref{strong1} and \ref{strong2} with respect to $c$. 
        
        For \ref{strong1}, using induction, let $q\in V(T)$ be from \ref{strong1} with respect to $c'$. Let $C$ be the set of colours that are used by $c$, and let $C'$ be the set of colours that are used by $c'$. Since $j$ is used by $c'$, observe that $C=C'\setminus \{i\}$. By choice of $q$, for each $\ell\in C$, there exists a part $P_{\ell}$ in $B_q$ coloured $\ell$ under $c'$. Since $\ell\neq i$, $P_{\ell}$ is also coloured $\ell$ under $c$. It follows that $B_q$ is rainbow under $c_{\scr{P}}$. So \ref{strong1} is satisfied. 
        
        For \ref{strong2}, by induction and since $i$ is not used by $c$, observe that it suffices to consider only the colour $j$. For each $\ell\in \{i,j\}$, let $S_{\ell}$ be the set of vertices $t\in V(T)$ such that $B_t$ contains a part coloured $\ell$ under $c'$. By \ref{strong2}, $T[S_{\ell}]$ is a nonempty subtree of $T$. Let $S$ be the set of vertices $t\in V(T)$ such that $B_t$ contains a part coloured $j$ under $c$. Observe that $S=S_i\cup S_j$. Further, observe that $q\in S_i\cap S_j$ as $i,j$ are used by $c'$. Thus, since $T[S_i]$ and $T[S_j]$ induce nonempty subtrees of $T$ with a common vertex, $T[S]$ induces a nonempty subtree of $T$, as desired. So \ref{strong2}, and the lemma, hold.

        Case \ref{OP3}:

        So there exist a cliquewidth-$k$ pair $(G',c)$ constructable with fewer operations and distinct $i,j\in \{1,\dots,k\}$, such that $G'$ is a proper subgraph of $G$ and $G$ is obtained from $G'$ by adding an edge between each pair of (non-adjacent) vertices $u,v$ with $u$ coloured $i$ and $v$ coloured $j$. Note that because $G'$ is a proper subgraph of $G$, both $i$ and $j$ are used by $c$.
        
        As $(G',c)$ can be built with fewer operations, induction applies. Let $\scr{P}'$ and $(B'_t:t\in V(T))$ be the partition of $G'$ and tree-decomposition of $G'/\scr{P}'$ respectively obtained via induction. So $\scr{P}'$ is dominated in $G'$ (and $G$), is $c$-monochromatic, and $(B'_t:t\in V(T))$ has width at most $k-1$ and obeys \ref{strong1} and \ref{strong2}.

        For $\ell\in \{i,j\}$, let $\scr{P}_{\ell}\subseteq \scr{P}'$ be the set of parts coloured $\ell$ under $c$, and let $P_{\ell}:=\bigcup_{P\in \scr{P}_{\ell}}$. Observe that since $\ell$ is used by $c$, $P_{\ell}$ is nonempty. Set $\scr{P}:=\scr{P}\cup \{P_i,P_j\}\setminus (\scr{P}_i\cup \scr{P}_j)$. Observe that $\scr{P}$ is a partition of $G$ (as $P_i,P_j$ are nonempty).
        
        We first show that $\scr{P}$ is dominated in $G$ and $c$-monochromatic. By induction, observe that it suffices to show that $P_i,P_j$ are dominated in $G$ and $c$-monochromatic. The latter property is immediate; by definition, each vertex of $P_i$ is coloured $i$ under $c$, and each vertex of $P_j$ is coloured $j$ under $c$. So we focus on the former property. Let $\ell\in \{i,j\}$, and let $\ell'$ be the unique element of $\{i,j\}\setminus \{\ell\}$. Since $\ell'$ is used by $c$, there is at least one vertex $v_{\ell'}\in V(G)$ coloured $\ell'$ under $c$. Every vertex of $P_{\ell}$ is coloured $\ell$ under $c$, and is therefore adjacent to $v_{\ell'}$ in $G$ by construction. Thus, $P_{\ell}$ is dominated in $G$, as desired.

        We move to finding the desired tree-decomposition of $G/\scr{P}$. For each $t\in V(T)$, let $B_t$ be obtained from $B'_t$ by replacing each instance of a part $P\in \scr{P}_i$ with $P_i$, and each instance of a part $P\in \scr{P}_j$ with $P_j$, deleting duplicates as necessary. 
        
        Before we show that $(B_t:t\in V(T))$ is a tree-decomposition of $G/\scr{P}$, we first show that \ref{strong1} holds. Using induction, let $q\in V(T)$ be from \ref{strong1} with respect to $\scr{P}'$. By choice of $q$ and since $i,j$ are used by $c$, there exist a part of $\scr{P}'$ coloured $i$ and a part of $\scr{P}'$ coloured $j$ in $B'_q$. Hence, $P_i,P_j\in B_q$. From this and by induction, it follows that $B_q$ is rainbow under $c_{\scr{P}}$. So \ref{strong1} holds.
        
        We now show that $(B_t:t\in V(T))$ is a tree-decomposition of $G/\scr{P}$. Note that, by construction and subject to the previous statement being true, $(B_t:t\in V(T))$ has width at most the width of $(B_t:t\in V(T))$, which is at most at most $k-1$ by induction.
        
        For \ref{TD2}, by induction, observe that it suffices to consider edges in $G/\scr{P}$ with at least one endpoint in $\{P_i,P_j\}$. Further, observe that a part $P\in \scr{P} \setminus \{P_i,P_j\}$ is adjacent to $P_{\ell}$, $\ell\in \{i,j\}$, in $G$ if and only if there exists $P' \in \scr{P}'$ coloured $\ell$ adjacent to $P$ in $G'$. By induction, there exists $t\in V(T)$ such that $P,P'\in B'_t$. Thus, by construction, $P,P_{\ell}\in B_t$. Finally, observe that $P_i$ and $P_j$ are adjacent in $G$, so we must find $t\in V(T)$ such that $P_i,P_j\in B_t$. However, as observed above, $P_i,P_j\in B_q$, where $q$ is the vertex satisfying \ref{strong1} with respect to $\scr{P}'$. So we can pick $t:=q$. Thus, \ref{TD2} holds.

        For \ref{TD1}, by induction, observe that it suffices to consider only $P_i$ and $P_j$. For $\ell\in \{i,j\}$, let $S_{\ell}$ be the set of vertices $t\in V(T)$ such that $P_{\ell}\in B_t$. Observe that $S_{\ell}$ is precisely the set of vertices $t\in V(T)$ such that $B'_t$ contains a part coloured $\ell$. But by induction with \ref{strong2}, $T[S_{\ell}]$ induces a nonempty subtree of $T$. Hence, \ref{TD1} is satisfied, and $(B_t:t\in V(T))$ is a tree-decomposition of $G/\scr{P}$ of width at most $k-1$.

        Since we already showed that \ref{strong1} holds, it remains only to show that \ref{strong2} holds. By induction, observe that it suffices to only consider the colours $i$ and $j$. However, $P_i$ is the only part of $\scr{P}$ coloured $i$, and $P_j$ is the only part of $\scr{P}$ coloured $j$. Thus \ref{strong2} holds due to \ref{TD1}, and the lemma is true.

        We have shown that the lemma holds in all three cases, which completes the proof.
    \end{proof}

    \section{Quasi-isometry}
    \label{secQI}

    In this section, we complete the proof of \cref{QIcwtw}. We first need to introduce some definitions.

    The \defn{length} of a path is the number of edges in the path. The \defn{distance} between a pair of vertices $u,v$ in a graph $G$, denoted \defn{$\dist_G(u,v)$}, is the minimum length of a path between them (or $\infty$ if no such path exists). The \defn{distance} in $G$ between a pair $S,T\subseteq V(G)$, denoted \defn{$\dist_G(S,T)$}, is the minimum distance between a vertex in $S$ and a vertex in $T$.
    
    The \defn{weak diameter} of $S\subseteq V(G)$ in $G$ is the maximum distance in $G$ between a pair of vertices in $S$. The \defn{diameter} of $G$ is the weak diameter of $V(G)$ in $G$; the maximum distance between a pair of vertices in $G$.

    For a positive real number $c$ and graphs $G,H$, a map $f:V(G)\mapsto V(H)$ \defn{$c$-quasi-isometry} from $G$ to $H$ if the following hold:
    \begin{enumerate}[label=(QI\arabic*)]
        \item \label{QI1} for all $u,v\in V(G)$, $\dist_G(x,y)/c-c\leq \dist_H(f(x),f(y))\leq c\dist_G(x,y)+c$, and,
        \item \label{QI2} for all $x\in V(H)$, there exists $v\in V(G)$ such that $\dist_H(x,f(v))\leq c$.
    \end{enumerate}

    We say that $G$ is \defn{$c$-quasi-isometric} to $H$ if there exists a $c$-quasi-isometry from $G$ to $H$.

    Using \ref{QI2}, observe that we can define an inverse map $f^{-1}:V(H)\mapsto V(G)$ that is a $c'$-quasi-isometry from $H$ to $G$, where $c'$ is bounded by a function of $c$. So, up to a change in constants, quasi-isometry is a reflexive property.

    If \ref{QI1} holds but not \ref{QI2}, we instead say that $f$ is a \defn{$c$-quasi-isometric embedding} from $G$ to $H$. Say that $G$ is \defn{$c$-quasi-isometrically embeddable} into $H$ if there exists a $c$-quasi-isometric embedding from $G$ to $H$. Note that, unlike quasi-isometries, quasi-isometrically embeddability is not reflexive.

    Say that a class of graphs $\scr{G}$ is \defn{$c$-quasi-isometrically embeddable} in another class of graphs $\scr{H}$ if, for each $G\in \scr{G}$, there exists $H\in \scr{H}$ such that $G$ is $c$-quasi-isometrically embeddable in $H$. We remark that we do not define a similar parameter with quasi-isometries instead of quasi-isometric embeddings, as even if we insist that \ref{QI2} holds, we do not get a reflexive relationship between $\scr{G}$ and $\scr{H}$.

    We can now proceed with the proof of \cref{QIcwtw}. The key idea is that partitions with `small' (in terms of weak diameter) parts translate directly to quasi-isometries, as shown with the following lemma (which is nearly identical to an observation that appeared without proof in \citet{Hickingbotham2025}).

    \begin{lemma}
        \label{partQI}
        Let $c\geq 0$ be a real number, let $G$ be a graph, and let $\scr{P}$ be partition of $G$ such that each part has weak diameter at most $c$. Then $G$ is $(c+1)$-quasi-isometric to $G/\scr{P}$.
    \end{lemma}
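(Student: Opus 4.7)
The plan is to let $f:V(G)\to V(G/\scr{P})$ send each vertex $v$ to the unique part $P\in\scr{P}$ with $v\in P$, and then verify \ref{QI1} and \ref{QI2} directly.

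Verifying \ref{QI2} is immediate: every vertex of $G/\scr{P}$ is a nonempty part $P$, so any $v\in P$ satisfies $f(v)=P$, giving $\dist_{G/\scr{P}}(P,f(v))=0\leq c+1$. The easy half of \ref{QI1} is the upper bound $\dist_{G/\scr{P}}(f(x),f(y))\leq \dist_G(x,y)$: a shortest $x$-$y$ path in $G$ projects to a walk in $G/\scr{P}$ between $f(x)$ and $f(y)$ (collapsing consecutive vertices lying in the same part), and the length can only decrease. This already beats the required $(c+1)\dist_G(x,y)+(c+1)$.

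The bulk of the work is the lower bound $\dist_G(x,y)/(c+1)-(c+1)\leq \dist_{G/\scr{P}}(f(x),f(y))$, which I plan to establish by lifting shortest paths from $G/\scr{P}$ to $G$. Given a shortest path $P_0P_1\cdots P_d$ in $G/\scr{P}$ with $x\in P_0$ and $y\in P_d$, for each $0\leq i\leq d-1$ pick an edge $u_iv_{i+1}\in E(G)$ witnessing the adjacency $P_iP_{i+1}$, with $u_i\in P_i$ and $v_{i+1}\in P_{i+1}$; set $v_0:=x$ and $u_d:=y$. Within each part $P_i$, the weak diameter bound gives a $v_i$-$u_i$ walk of length at most $c$, and concatenating these with the $d$ cross-part edges yields an $x$-$y$ walk in $G$ of length at most $(d+1)c+d=(c+1)d+c$. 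Rearranging gives $\dist_G(x,y)\leq (c+1)\dist_{G/\scr{P}}(f(x),f(y))+(c+1)^2$, which is equivalent to the required inequality. When $d=\infty$ the same bound is vacuous, and when $f(x)=f(y)$ (so $d=0$) the weak diameter bound gives $\dist_G(x,y)\leq c$, from which the required inequality follows since $c/(c+1)-(c+1)\leq 0$.

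The main obstacle, such as it is, is just the careful bookkeeping in the lifting argument — making sure to count exactly $d+1$ intra-part walks and $d$ cross-part edges, and handling the boundary endpoints $x$ and $y$ cleanly. Everything else is formalism.
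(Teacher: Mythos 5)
Your proposal is correct and takes essentially the same route as the paper's proof: the same canonical projection map, the same surjectivity argument for \ref{QI2}, the same shortest-path projection for the upper bound, and the same path-lifting with weak-diameter stitching for the lower bound. The only cosmetic differences are notation and the fact that you loosen the additive constant from $c$ to $(c+1)^2$ after arriving at the same intermediate bound $\dist_G(x,y)\leq (c+1)d+c$, while the paper keeps the sharper form; both comfortably meet the required $(c+1)$-quasi-isometry bound.
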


    \begin{proof}
        Let $f:V(G)\mapsto V(G/\scr{P})$ be the map that sends each $v\in V(G)$ to the unique $P\in \scr{P}$ such that $v\in P$. We show that $f$ is a $(c+1)$-quasi-isometry. Since parts are nonempty, $f$ is injective. In particular, \ref{QI2} is trivially true. So we focus on \ref{QI1}.

        Fix $x,y\in V(G)$, and let $r:=\dist_G(x,y)$, $r':=\dist_{G/\scr{P}}(f(x),f(y))$. We must show that $r/(c+1)-(c+1)\leq r'\leq (c+1)r+(c+1)$. In fact, we show that $r/(c+1)-1\leq r'\leq r$. For the upper bound on $r'$, observe that, by triangle inequality, it suffices to consider the case when $x,y$ are adjacent. So $r=1$, and we must show $r'\leq 1$. However, this is immediate, as if two vertices in $G$ are adjacent, then the parts containing them are adjacent in $G/\scr{P}$. So we focus on the lower bound on $r'$.

        Let $f(x)=P_0,P_1,P_2,\dots,P_{r'}=f(y)$ be a path between $f(x)$ and $f(y)$ in $G/\scr{P}$ of length $r'$. By definition of the quotient, for each $i\in \{0,\dots,r'-1\}$, there exist vertices $y_i\in P_i$ and $x_{i+1}\in P_{i+1}$ that are adjacent in $G$. Set $x_0:=x$ and $y_{r'}:=y$. Since each part in $\scr{P}$ has weak diameter in $G$ at most $c$, for each $i\in \{0,\dots,r'\}$, we have that $\dist_G(x_i,y_i)\leq c$. By triangle inequality, it follows that $r\leq c(r'+1)+r'\leq (c+1)r'+(c+1)$. Thus, $r/(c+1)-1\leq r'$, as desired. So \ref{QI1}, and the lemma, hold.
    \end{proof}

    Observe that in a dominated partition, each part has weak diameter at most $2$. Hence, \cref{QIcwtw} follows as an immediate corollary of \cref{main,partQI}.

    \section{Proof of \cref{lowerBound}}
    \label{secLB}
    In this section, we prove \cref{lowerBound}. Neither the counterexample nor the proof is deep; we just show that a sufficiently subdivided complete graph has the desired properties. However, the proof is a little technical, and is aided by including some preparatory lemmas.

    We quickly give some basic definitions.

    A \defn{spider} is a tree with at most one vertex of degree at least 3.

    For a real number $r\geq 0$ and a graph $G$, the \defn{closed $r$-neighbourhood} of a set $S\subseteq V(G)$, denoted \defn{$N^r_G[S]$}, is the set of all vertices at distance at most $r$ from $S$ in $G$. Note that $S\subseteq N^r_G[S]$.
    
    For an integer $n\geq 0$, to \defn{subdivide} an edge $uv$ in a graph $G$ $n$ times is to replace $uv$ with a path from $u$ to $v$ of length exactly $n+1$, and to \defn{subdivide} $uv$ at least $n$ times is to replace $uv$ with a path from $u$ to $v$ of length at least $n+1$. The \defn{$n$-subdivision} of $G$ is the graph obtained from $G$ by subdividing each edge exactly $n$ times, and a \defn{$\geq n$-subdivision} is any graph obtained from $G$ by subdividing each edge at least $n+1$ times. A graph is a \defn{subdivision} of $G$ if it is a $\geq 0$-subdivision of $G$.

    A graph $H$ is a \defn{minor} of a graph $G$ if there exists connected and pairwise disjoint subgraphs $\{X_v:v\in V(H)\}$ such that $X_v$ is adjacent to $X_{v'}$ whenever $vv'\in E(H)$. Note that if we can find connected subgraphs $\{X_v:v\in V(H)\}\cup \{P_e:e\in E(H)\}$ such that (1) both $\{X_v:v\in V(H)\}$ and $\{P_e:e\in E(H)\}$ are pairwise disjoint collections, and (2) $P_e$, $e\in E(H)$, intersects $X_v$, $v\in V(H)$, if and only if $v$ is an endpoint of $e$, then $H$ is a minor of $G$.

    We can now proceed to the proof of \cref{lowerBound}.

    \begin{lemma}
        \label{cwPath}
        Let $n\geq 3$ be an integer, and let $i,j,k\in \{1,\dots,n\}$ be such that $j\notin \{i,k\}$ and $\{1\dots,n\}\setminus \{i,j,k\}\neq \emptyset$. Let $P$ be a path on at least two vertices, and let $c$ be a colouring of $P$ obtained from giving one endpoint $x$ the colour $i$, the other endpoint $y$ the colour $j$, and all other vertices the colour $k$. Then $(P,c)$ is a cliquewidth-$n$ pair.
    \end{lemma}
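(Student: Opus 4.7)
The plan is induction on the length $m := |V(P)| - 1$, using a spare colour $\ell \in \{1, \dots, n\} \setminus \{i, j, k\}$ (which exists by hypothesis). Label the vertices $v_0 = x, v_1, \dots, v_m = y$. The base case $m = 1$ is immediate: take the disjoint union (OP1) of two single-vertex pairs on $v_0$ and $v_1$ coloured $i$ and $j$ respectively, then apply OP3 on $\{i, j\}$ to add the unique edge.

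For the inductive step $m \geq 2$, I would apply the induction hypothesis to the subpath $P' := v_0 \dots v_{m-1}$ with the colouring $v_0 \to i$, $v_{m-1} \to \ell$, and all internal vertices $\to k$. The lemma's hypotheses transfer to this recursive instance: $\ell \notin \{i, k\}$ (since $\ell \notin \{i, j, k\}$), and $j$ itself is a spare colour outside $\{i, \ell, k\}$. Given the resulting $(P', c')$, I extend it to $(P, c)$ in three operations: (i) OP1 adjoining a single-vertex pair on $v_m$ coloured $j$; (ii) OP3 on $\{\ell, j\}$, which adds only the edge $v_{m-1} v_m$ because $v_{m-1}$ and $v_m$ are the unique vertices of their respective colours; (iii) OP2 recolouring $\ell \to k$, so that $v_{m-1}$ joins the internal $k$-coloured vertices, giving the desired $(P, c)$.

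The delicate point is step (iii): OP2 requires both $\ell$ and $k$ to be in use at that moment. Colour $\ell$ is in use via $v_{m-1}$, and $k$ is in use whenever $m \geq 3$ (via the internal vertex $v_1$) or $i = k$ (via $v_0$). The remaining case $m = 2$ with $i \neq k$ must be handled separately as a second base case; here $i, j, k$ are pairwise distinct and one builds $P = v_0 v_1 v_2$ directly: form single-vertex pairs on $v_0$ and $v_1$ coloured $i$ and $k$, take their disjoint union by OP1, apply OP3 on $\{i, k\}$ to add $v_0 v_1$, adjoin a single-vertex pair on $v_2$ coloured $j$ by OP1, and apply OP3 on $\{k, j\}$ — which adds only $v_1 v_2$ because $v_0$'s colour $i$ lies outside $\{k, j\}$. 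Conceptually, $\ell$ is used throughout as a temporary tag on the growing endpoint so that OP3 inserts exactly one edge; the main obstacle is the bookkeeping of which colours remain in use as the construction proceeds, and this is precisely what forces the $m = 2$, $i \neq k$ case to be treated separately.
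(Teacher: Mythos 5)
Your proof is correct and takes essentially the same approach as the paper: induction on path length, using a spare colour $\ell$ to tag the growing endpoint and then applying \ref{OP1}, \ref{OP3}, and \ref{OP2} in turn. The one difference is that you explicitly address the subtlety that \ref{OP2} requires the target colour $k$ to already be in use (adding a separate base case for $m=2$, $i\neq k$), whereas the paper covers this implicitly via its earlier remark that the ``used'' requirement in \ref{OP2} can be dropped up to a colour permutation.
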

    \begin{proof}
        By induction on $|V(P)|$. The base case is when $|V(P)|=2$. Since $i\neq j$ and since $i,j\in \{1,\dots,n\}$, this case is trivial; start with the respective singleton graphs with the correct colours, and then use \ref{OP1} and \ref{OP3}. So we proceed to the inductive step with $|V(P)|\geq 3$.

        Since $\{1\dots,n\}\setminus \{i,j,k\}\neq \emptyset$, there exists some $\ell \in \{1\dots,n\}$ with $\ell \notin \{i,j,k\}$. Note that since $j\notin \{i,k\}$, $j\in \{1,\dots,n\}\setminus \{i,k,\ell\}$.
        
        Let $P':=P-y$. Since $|V(P)|\geq 3$, $|V(P')|\geq 2$. Thus, $P'$ has an endpoint $y'$ distinct from $x$. Let $c'$ be the colouring of $P'$ obtained from giving $x$ the colour $i$, $y'$ the colour $\ell$, and all other vertices the colour $k$. By induction $(P',c')$ is a cliquewidth-$n$ pair. 

        Observe that $(P,c)$ can be obtained from $(P',c')$ by adding a new vertex $y$ coloured $j$, adding the edge $yy'$, and then recolouring $y'$ to $k$. We can achieve the first step with \ref{OP1}, and then the second step with \ref{OP3} with the colours $j$ and $\ell$ as $j\neq \ell$, and since $y$ is the only vertex coloured $j$ and $y'$ is the only vertex coloured $\ell$. Finally, we can achieve the third step with \ref{OP2}, changing all the colour of all vertices coloured $\ell$ to $k$, as $y'$ is the only vertex coloured $\ell$. Thus, $(P,c)$ is a cliquewidth-$n$ pair, as desired.
    \end{proof}

    We remark that \cref{cwPath} gives the following (well-known) result as a corollary.
    \begin{corollary}
        \label{cwPathRed}
        Every path has cliquewidth at most $3$.
    \end{corollary}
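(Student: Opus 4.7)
The plan is to derive this corollary directly from Lemma~\ref{cwPath}, essentially as a one-line application. The statement splits into two cases: paths on at most one vertex, and paths on at least two vertices.

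First I would dispose of the trivial case $|V(P)| \leq 1$. By the base clause of the definition of a cliquewidth-$k$ pair (``either $|V(G)| \leq 1$, or \ldots''), any such $P$ equipped with any colouring on $\{1,2,3\}$ is automatically a cliquewidth-$3$ pair, so $P$ has cliquewidth at most $3$.

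The main case is $|V(P)| \geq 2$, and here I would invoke Lemma~\ref{cwPath} with $n = 3$, $i = 1$, $j = 2$, and $k = 1$. The key observation is that the hypothesis $j \notin \{i,k\}$ in Lemma~\ref{cwPath} does \emph{not} require $i,j,k$ to be pairwise distinct; it merely forbids $j$ from coinciding with $i$ or with $k$. With this choice we have $j = 2 \notin \{1\} = \{i,k\}$, and moreover $\{1,2,3\} \setminus \{i,j,k\} = \{1,2,3\} \setminus \{1,2\} = \{3\}$ is nonempty, so both hypotheses of Lemma~\ref{cwPath} are met. The lemma then yields that the colouring $c$ of $P$ assigning colour $1$ to one endpoint, colour $2$ to the other endpoint, and colour $1$ to every internal vertex produces a cliquewidth-$3$ pair $(P,c)$; hence $P$ has cliquewidth at most $3$.

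Since the argument reduces entirely to plugging specific parameters into the preceding lemma, I anticipate no real obstacle. The only subtlety worth flagging is that Lemma~\ref{cwPath} permits the ``endpoint'' colour $i$ and the ``internal'' colour $k$ to coincide; this is precisely what enables the path to be built using only three colours rather than the four that a reading insisting on pairwise distinct $i,j,k$ would suggest.
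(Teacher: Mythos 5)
Your proof is correct and takes essentially the same approach as the paper: the paper likewise applies Lemma~\ref{cwPath} with $n=3$, $i=k=1$, and $j=2$, relying on the same observation that the hypotheses permit $i=k$. Your case split on $|V(P)|\leq 1$ versus $|V(P)|\geq 2$ is in fact slightly cleaner than the paper's phrasing, which splits by path length and invokes the lemma only for length at least two even though a two-vertex path (length one) is already covered by it.
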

    \begin{proof}
        This is trivial for paths of length 1. For length at least two, apply \cref{cwPath} with $n=3$, $i=k=1$, and $j=2$.
    \end{proof}

    \begin{lemma}
        \label{cwSpider}
        Let $t\geq 3$ be an integer, let $S$ be a spider with exactly $t$ leaves, and let $c$ be the colouring of $S$ defined by giving each leaf a distinct colour in $\{1,\dots,t\}$, and all other vertices the colour $t+1$. Then $(S,c)$ is a cliquewidth-$(t+3)$ pair.
    \end{lemma}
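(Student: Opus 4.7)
The plan is to build $(S,c)$ using the operations \ref{OP1}--\ref{OP3} by attaching the $t$ legs one at a time to the central vertex $v$. Since $t\geq 3$ and $S$ is a spider, $S$ has a unique vertex of degree at least $3$; let $v$ be this vertex. Each leg $L_i$ is the subpath from the neighbour $a_i$ of $v$ out to the leaf $\ell_i$. I use the colours $\{1,\dots,t\}$ for leaves (with $\ell_i$ to receive colour $i$) and $t+1$ for the target colour of every non-leaf vertex. I reserve $t+2$ and $t+3$ as temporary markers: throughout the construction $v$ will carry the marker $t+2$, and in each step the newly attached vertex closest to $v$ will briefly carry the marker $t+3$ before being recoloured.

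Starting from the singleton $\{v\}$ coloured $t+2$ (a trivial cliquewidth-$(t+3)$ pair), I maintain inductively the following invariant: after attaching the first $i$ legs, I have a cliquewidth-$(t+3)$ pair consisting of $v$ together with $L_1,\dots,L_i$ as in $S$, where $v$ has colour $t+2$, each $\ell_j$ (for $j\leq i$) has colour $j$, and every other already-attached vertex has colour $t+1$.

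To attach $L_{i+1}$ when $|L_{i+1}|\geq 2$, I first build $(L_{i+1},c_{i+1})$ directly via \cref{cwPath} applied with $n=t+3$, giving $\ell_{i+1}$ the colour $i+1$, the attach vertex $a_{i+1}$ the colour $t+3$, and each internal vertex of the leg the colour $t+1$. The hypothesis of \cref{cwPath} holds because $\{i+1,t+3,t+1\}$ are three distinct colours in $\{1,\dots,t+3\}$ and $t+2$ provides the required fourth available colour. I then take the disjoint union with the current graph via \ref{OP1}, apply \ref{OP3} with the pair $(t+2,t+3)$ to add the sole new edge $va_{i+1}$, and finish with \ref{OP2} changing colour $t+3$ to colour $t+1$ so that $a_{i+1}$ gets its intended colour. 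When $|L_{i+1}|=1$ the leg consists only of $\ell_{i+1}=a_{i+1}$, and I instead take the disjoint union with a singleton coloured $t+3$, use \ref{OP3} with $(t+2,t+3)$ to attach the edge $v\ell_{i+1}$, and use \ref{OP2} to recolour $t+3$ directly to $i+1$. After all $t$ legs are attached, a final \ref{OP2} changes $t+2$ on $v$ to $t+1$, producing $(S,c)$.

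The one delicate point is that several of these \ref{OP2} applications target a colour that is not yet used: the recolouring to $i+1$ in the length-$1$ case whenever $L_{i+1}$ is the first leg to use that colour, and the final recolouring of $v$ to $t+1$ when every leg has length $1$ (so no interior vertex was ever created). However, the paragraph immediately after the definitions of \ref{OP1}--\ref{OP3} explicitly notes that the `$j$ is used' requirement in \ref{OP2} can be dropped without changing the notion of cliquewidth-$k$ pair, on account of invariance under permutations of colours, so each such application is legitimate and the induction closes.
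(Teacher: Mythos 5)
Your proof is correct. It follows the same basic blueprint as the paper's argument---build each leg as a path using \cref{cwPath}, use the two spare colours $t+2$ and $t+3$ as temporary markers, and carefully invoke the observation that the ``$j$ is used'' requirement in \ref{OP2} can be waived---but it differs in the order of assembly. The paper constructs all $t$ legs (minus the centre) as separate cliquewidth-$(t+2)$ pairs, forms one big disjoint union together with the singleton centre coloured $t+3$, then attaches all legs with a small number of \ref{OP3} applications (one for all long legs at once using the persistent marker $t+2$, plus one per short leg) before two final recolourings. You instead attach the legs one at a time, keeping only the centre's marker $t+2$ persistent and resetting the temporary attach marker $t+3$ after each step, which requires $n=t+3$ rather than $n=t+2$ in the application of \cref{cwPath}. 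The incremental approach makes the invariant a bit cleaner to state and verify (at each step exactly one vertex carries each of the markers $t+2$ and $t+3$), at the cost of more applications of \ref{OP1}--\ref{OP3}; the paper's batched version is terser. Both are sound, and you correctly identify and discharge the delicate point about \ref{OP2} targeting an unused colour.
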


    \begin{proof}
        Let $r$ be the unique vertex of degree at least $3$ (which must exist as $S$ has at least three leaves). Assume that the leaves are labelled $1,\dots,t$, matching the colour they received. Let $L$ be the set of leaves adjacent to $r$.
        
        For each $i\in \{1,\dots,t\}$, let $P_i'\subseteq S$ be the path from $r$ to $i$, and let $P_i:=P_i'-r$. If $|V(P_i)|=1$ (which occurs if and only if $i\in L$), let $y_i:=i$, and let $c_i$ be the colouring of $P_i$ obtained from giving $i$ the colour $i$. In this case, $(P_i,c_i)$ is trivially a cliquewidth-$(t+3)$ pair. Otherwise, let $y_i$ be the endpoint of $P_i$ different from $i$, and let $c_i$ be the colouring of $P_i$ obtained from giving $i$ the colour $i$, $y_i$ the colour $t+2$, and all other vertices the colour $t+1$. Observe that $\{1,\dots,t+2\}\setminus \{i,t+1,t+2\}$ is nonempty as $t\geq 3$, and that $t+2\notin \{i,t+1\}$. Thus, by \cref{cwPath} with $n:=t+2$, $x:=i$, $y:=y_i$, $j:=t+2$, and $k:=t+1$, $(P_i,c_i)$ is a cliquewidth-$(t+2)$ pair (and thus also a cliquewidth-$(t+3)$ pair).

        Let $P_0$ be the graph consisting of only the vertex $r$, and let $c_0$ be the colouring of $P_0$ where $r$ is coloured $t+3$. $(P_0,c_0)$ is trivially a cliquewidth-$(t+3)$ pair. Let $S'$ be the disjoint union of $P_0,P_1,\dots,P_t$, and let $c':=c_0\cup c_1\cup \dots \cup c_t$. By repeated use of \ref{OP1}, $(S',c')$ is a cliquewidth-$(t+3)$ pair. Observe that $S$ can be obtained from $S'$ by adding, for each $i\in \{1,\dots,t\}$, an edge $ry_i$, and that $c$ can be obtained from $c'$ by recolouring $r$ to $t+1$, and, for each $i\in \{1,\dots,t\}\setminus L$, recolouring $y_i$ to $t+1$. We can achieve the former by use repeated use of \ref{OP3}. First, with the colours $t+3$ and $t+2$, as $r$ is the only vertex coloured $t+3$, and the set of vertices coloured $t+2$ is precisely $\{y_i:i\in \{1,\dots,t\}\setminus L\}$. Then, for each $i\in L$, with the colours $t+3$ and $i$, as $i=y_i$ is the only vertex coloured $i$. Finally, we can achieve the latter by two uses of \ref{OP2}, changing the colour all the vertices coloured $t+3$, and then $t+2$, to $t+1$. So $(S,c)$ is a cliquewidth-$(t+3)$ pair, as desired.
    \end{proof}

    We can now show that the first property in \cref{lowerBound}, the bound on the cliquewidth, is satisfied.

    \begin{lemma}
        \label{cwKn}
        Any subdivision $G$ of $K_n$, $n\geq 4$ an integer, has cliquewidth at most $n+2$.
    \end{lemma}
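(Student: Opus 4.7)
The plan is to construct $G$ as a cliquewidth-$(n+2)$ pair via a two-phase procedure. Denote the branch vertices of $G$ by $u_1, \ldots, u_n$ and, for each pair $\{i,j\}$, let $P_{ij}$ be the subdivided path from $u_i$ to $u_j$ in $G$, of length $\ell_{ij} \geq 1$.

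\emph{Phase 1 (subdivided star at $u_1$).} Let $H \subseteq G$ be the subgraph consisting of $u_1$ together with $P_{1t}$ for each $t \in \{2, \ldots, n\}$. This is a spider centred at $u_1$ with $n-1 \geq 3$ leaves $u_2, \ldots, u_n$. Applying \cref{cwSpider} (with $t := n-1$) and permuting colours yields a cliquewidth-$(n+2)$ pair $(H, c_H)$ in which $u_s$ is coloured $s$ for each $s \in \{2, \ldots, n\}$ and $u_1$ together with every internal vertex of $H$ is coloured $1$. Crucially, no vertex carries colour $n+1$ or $n+2$ after this phase.

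\emph{Phase 2 (remaining subdivided edges).} For each pair $\{s,t\}$ with $2 \leq s < t \leq n$, we extend the current graph by adding $P_{st}$, using $n+1$ and $n+2$ as working ``tip'' colours. If $\ell_{st} = 1$, apply \ref{OP3} with colours $s$ and $t$; since $u_s, u_t$ are the unique vertices of these colours, precisely the edge $u_s u_t$ is added. If $\ell_{st} \geq 2$, set $m := \ell_{st} - 1$ and label the internal vertices of $P_{st}$ as $w_1, \ldots, w_m$. First add $w_1$ as a singleton coloured $n+1$ via \ref{OP1} and attach it to $u_s$ via \ref{OP3} with colours $s$ and $n+1$. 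For each $r = 2, \ldots, m$, add $w_r$ as a singleton coloured $n+2$ if $r$ is even and $n+1$ if $r$ is odd, attach $w_{r-1} w_r$ via \ref{OP3} applied to the two distinct current tip colours, and then recolour the old tip $w_{r-1}$ to $1$ via \ref{OP2}. Finally, attach $u_t$ to $w_m$ via \ref{OP3} with $t$ and $w_m$'s colour, and recolour $w_m$ to $1$ via \ref{OP2}.

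At every moment, each of the colours $n+1$ and $n+2$ is held by at most one vertex, so every \ref{OP3} adds exactly the intended edge and every \ref{OP2} recolours only the intended vertex. The intermediate graph uses at most $n+2$ colours at any point (the peak being inside the invocation of \cref{cwSpider}), so the procedure produces $(G, c)$ as a cliquewidth-$(n+2)$ pair. The main obstacle is Phase 2's colour bookkeeping to ensure no spurious edges are introduced between the many branch vertices and previously built internals; the alternating-tip technique, enabled by having exactly two free colours after Phase 1, is the key that resolves it.
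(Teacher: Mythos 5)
Your proof is correct and follows essentially the same strategy as the paper's: build the subdivided star at one branch vertex via \cref{cwSpider}, then add the remaining subdivided edges one at a time, using the two spare colours $n+1$ and $n+2$ as temporary ``endpoint'' markers so that each \ref{OP3} adds only the intended edge. The only difference is mechanical: the paper constructs each remaining interior path in one shot via \cref{cwPath} (endpoints coloured $n+1,n+2$, middle coloured $n$) and then grafts it on with a single \ref{OP1} followed by two \ref{OP3}'s and two \ref{OP2}'s, whereas you build each interior path vertex-by-vertex with alternating tip colours; both bookkeeping schemes are valid and stay within $n+2$ colours.
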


    \begin{proof}
        Let $X$ be the vertices of $G$ of degree at least $3$. We consider the vertices in $X$ to be labelled $1,\dots,t$. 
        
        Let $S$ be the spider obtained from taking the union of all the paths from $n$ to $\{1,\dots,n-1\}$ with no internal vertices in $\{1,\dots,n\}$. So $S$ is a spider with $n-1\geq 3$ leaves. Let $c$ be the colouring of $S$ obtained from giving each $i\in \{1,\dots,n\}$ the colour $i$ and all other vertices the colour $n$. By \cref{cwSpider} with $t:=n-1$, $S$ is a cliquewidth-$(n+2)$ pair.

        For each pair $i,j\in \{1,\dots,n-1\}$ with $i<j$, let $P_{i,j}$ be the path from $i$ to $j$ with no internal vertices in $\{1,\dots,n\}$, and let $P_{i,j}'$ be the (possibly empty) interior of $P_{i,j}$. If $|V(P'_{i,j})|=1$, let $v$ be the unique vertex of $P'_{i,j}$, set $x_{i,j}:=y_{i,j}:=v$, and let $c'_{i,j}$ be the colouring of $P_{i,j}'$ obtained by giving $v$ the colour $n+1$. It is immediate that $(P_{i,j}',c_{i,j}')$ is a cliquewidth-$(n+2)$ pair. If instead $|V(P'_{i,j})|\geq 2$, then let $c'_{i,j}$ be the colouring of $P_{i,j}'$ obtained from giving the vertex $x_{i,j}$ adjacent to $i$ the colour $n+1$, the vertex $y_{i,j}$ adjacent to $j$ the colour $n+2$, and every other vertex the colour $n$. Since $\{1,\dots,n+2\}\setminus \{n,n+1,n+2\}\neq \emptyset$ as $n\geq 4$, by \cref{cwPath} with $n:=n+2,x:=x_{i,j}$, $y:=y_{i,j}$, $i:=n+1$, $j:=n+2$, and $k:=n$, we obtain that $(P_{i,j}',c_{i,j}')$ is a cliquewidth-$(n+2)$ pair.
        
        For each pair $i,j\in \{0,\dots,n-1\}$ with $i\leq j$, let $G_{i,j}:=S\cup \bigcup_{k=1}^i\bigcup_{\ell=k+1}^j P_{k,\ell}$, and let $c_{i,j}$ be the colouring of $G_{i,j}$ obtained by giving each $i\in \{1,\dots,n\}$ the colour $i$, and all other vertices the colour $n$. Observe that $G_{n-1,n-1}=G$ and that, for each $j\in \{0,\dots,n-1\}$, $G_{0,j}=S$ and $c_{0,j}=c$. Also, note that for each $i\in \{1,\dots,n-1\}$, $G_{i-1,i}=G_{i,i}$ and $c_{i-1,i}=c_{i,i}$. Finally, observe that, for each pair $i,j\in \{1,\dots,n-1\}$ with $i>j$, if $|V(P'_{i,j})|\geq 1$, then $G_{i,j}$ can be obtained from the disjoint union of $G_{i,j-1}$ and $P'_{i,j}$ by adding the edges $ix_{i,j}$ and $jy_{i,j}$, and $c_{i,j}$ can be obtained from $c_{i,j-1}\cup c'_{i,j}$ by recolouring both $x_{i,j}$ and $y_{i,j}$ to $n$. If instead $|V(P'_{i,j})|=0$, then $G_{i,j}$ can be obtained from $G_{i,j-1}$ by adding the edge $ij$, and $c_{i,j-1}=c_{i,j}$.

        \begin{claim}
            \label{claimCwKn}
            For each pair $i,j\in \{0,\dots,n-1\}$ with $i\leq j$, $(G_{i,j},c_{i,j})$ is a cliquewidth-$(n+2)$ pair.
        \end{claim}
        \begin{proof}
            By induction on $i+j$. Observe that $i+j\geq 0$, and if $i+j=0$, then $i=j=0$. In this case, $G_{i,j}=S$ and $c_{i,j}=c$. Thus, $(G_{i,j},c_{i,j})$ is a cliquewidth-$(n+2)$ pair, as desired. Additionally, observe that the same argument applies anytime $i=0$. So we proceed to the inductive step, while assuming that $i\geq 1$.

            If $i=j$, then set $i':=i-1$. Observe that $i'+j< i+j$, so induction applies to $(G_{i',j},c_{i',j})$. However, since $i=j=i'+1$ and as observed above, $(G_{i',j},c_{i',j})=(G_{i,j},c_{i,j})$. So $(G_{i,j},c_{i,j})$ is a cliquewidth-$(n+2)$ pair, as desired.

            Otherwise, assume $i>j$. We need to consider three cases based on $|V(P_{i,j}')|$. 
            
            If $|V(P_{i,j}')|=0$, then, as observed above, $c_{i,j-1}=c_{i,j}$, and $G_{i,j}$ can be obtained from $G_{i,j-1}$ by adding the edge $ij$. Since $i$ is the only vertex coloured $i$ and since $j$ is the only vertex coloured $j$, this can be achieved using \ref{OP2} (as $i\neq j$). It follows that $(G_{i,j},c_{i,j})$ is a cliquewidth-$(n+2)$ pair, as desired. 
            
            If $|V(P_{i,j}')|=1$, then $G_{i,j}$ can be obtained from the disjoint union of $G_{i,j-1}$ and $P'_{i,j}$ by adding the edges $ix_{i,j}$ and $jy_{i,j}=jx_{i,j}$, and $c_{i,j}$ can be obtained from $c_{i,j-1}\cup c'_{i,j}$ by recolouring $x_{i,j}=y_{i,j}$ to $n$. Since $i$ is the only vertex coloured $i$, since $j$ is the only vertex coloured $j$, and since $x_{i,j}$ is the only vertex coloured $n+1$, the former can be achieved by \ref{OP1} and then two uses of \ref{OP3}; first with the colours $i$ and $n+1$, and then with the colours $j$ and $n+1$. We can then achieve the latter with \ref{OP2}, changing the colour of each vertex coloured $n+1$ to $n$. It follows that $(G_{i,j},c_{i,j})$ is a cliquewidth-$(n+2)$ pair, as desired.

            If $|V(P_{i,j}')|\geq 2$, then $G_{i,j}$ can be obtained from the disjoint union of $G_{i,j-1}$ and $P'_{i,j}$ by adding the edges $ix_{i,j}$ and $jy_{i,j}$, and $c_{i,j}$ can be obtained from $c_{i,j-1}\cup c'_{i,j}$ by recolouring both $x_{i,j}$ and $y_{i,j}$ to $n$. Since $i$ is the only vertex coloured $i$, since $j$ is the only vertex coloured $j$, since $x_{i,j}$ is the only vertex coloured $n+1$, and since $y_{i,j}$ is the only vertex coloured $n+2$, the former can be achieved by \ref{OP1} and then two uses of \ref{OP3}; first with the colours $i$ and $n+1$, and then with the colours $j$ and $n+2$. We can then achieve the latter with two uses of \ref{OP2}, first changing the colour of every vertex coloured $n+1$ to $n$, and then changing the colour of every vertex coloured $n+2$ to $n$. It follows that $(G_{i,j},c_{i,j})$ is a cliquewidth-$(n+2)$ pair, as desired.

            Thus, in all cases, $(G_{i,j},c_{i,j})$ is a cliquewidth-$(n+2)$ pair. This completes the inductive step, and the proof of the claim. 
        \end{proof}

        The lemma follows from \cref{claimCwKn} as $G_{n-1,n-1}=G$.
    \end{proof}

    We remark that any subdivision of $K_2$ (a path) has cliquewidth at most $3$ by \cref{cwPathRed}, and that an easy argument using \cref{cwPath} shows that any subdivision of $K_3$ (a cycle) has cliquewidth at most $4$.

    It remains only to show that second property in \cref{lowerBound}, being the treewidth of any graph it is quasi-isometric to. This follows from the relationship between subdivision, quasi-isometry, and minors. 
    
    The following lemma is likely already well-known, but we include a proof for completeness.

    \begin{lemma}
        \label{qiMinor}
        Let $c\geq 1$ be a real number, let $H$ be a graph, let $G$ be a $\geq (4c(c+1)-1)$-subdivision of $H$, and let $G'$ be a graph that $G$ $c$-quasi-isometrically embeds into. Then $G'$ contains $H$ as a minor.
    \end{lemma}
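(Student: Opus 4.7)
The plan is to construct a minor model of $H$ directly in $G'$, using $f$-images of a carefully partitioned $V(G)$ thickened to connected subgraphs by $G'$-shortest paths.

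For each edge $e = uv \in E(H)$, write the corresponding path in $G$ as $P_e = p_0^e p_1^e \cdots p_{\ell_e}^e$, with $p_0^e = u$, $p_{\ell_e}^e = v$, and $\ell_e \geq n := 4c(c+1)$. Concatenating $G'$-shortest paths between consecutive $f(p_j^e), f(p_{j+1}^e)$ produces a walk $W_e$ from $f(u)$ to $f(v)$ in $G'$; since consecutive vertices of $P_e$ are at $G$-distance $1$, property \ref{QI1} bounds each segment at $G'$-length at most $2c$, so every vertex of $W_e$ lies within $G'$-distance $c$ of some $f(p_j^e)$. Choose a threshold $K$, and for each $v \in V(H)$ and each edge $e \ni v$, let the stub $A_e^v$ consist of $v$ together with the first $K-1$ interior vertices of $P_e$ from $v$. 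Set $A_v := \bigcup_{e \ni v} A_e^v$ and $M_e := V(P_e) \setminus (A_e^u \cup A_e^v)$. Define the branch set $X_v \subseteq V(G')$ to be $f(A_v)$ together with a fixed $G'$-shortest path between $f(p)$ and $f(p')$ for each $pp' \in E(G[A_v])$; since $G[A_v]$ is a connected star centred at $v$, $X_v$ is a connected subgraph of $G'$. Define the connector $Q_e$ analogously from $f(M_e)$, extended by the two boundary $G'$-shortest paths joining $f(M_e)$ to its adjacent images in $A_e^u \cup A_e^v$ so that $Q_e$ is adjacent to both $X_u$ and $X_v$.

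The minor model is $\{X_v\}_{v \in V(H)} \cup \{Q_e\}_{e \in E(H)}$; by the criterion on minor models stated just before the lemma, it suffices to verify pairwise disjointness, with $Q_e$ only meeting $X_u$ and $X_v$ for $e = uv$. The core principle is that any vertex of $G'$ in the thickening of $f(S) \subseteq f(V(G))$ is within $G'$-distance $c$ of some $f(p)$ with $p \in S$, so a vertex belonging to two such thickenings of $f(S)$ and $f(T)$ witnesses $p \in S$, $q \in T$ with $\dist_{G'}(f(p), f(q)) \leq 2c$ and hence $\dist_G(p, q) \leq 3c^2$ by the right-hand inequality of \ref{QI1}. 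The partition is designed so that vertices in distinct classes lie at $G$-distance strictly greater than $3c^2$: distinct $A_v, A_w$ use $\dist_G(v, w) \geq n$ for distinct branch vertices combined with the stub size, while distinct middles $M_e, M_{e'}$ sharing a branch $u$ use that any path connecting them must traverse at least $K$ steps on each side of $u$.

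The main obstacle is the joint calibration of $K$: it must simultaneously satisfy $2K > 3c^2$ for middle-disjointness via the shared-branch shortcut, and $n - 2K > 3c^2$ for branch-stub disjointness along the connecting edge-path. The subdivision length $n = 4c(c+1)$ in the hypothesis is what opens a viable range for $K$; a tighter bookkeeping of $G$-distances between thickenings (rather than just the generic $3c^2$ bound) may be needed to make the range non-empty for large $c$, and this is the heart of the technical work. Adjacency of $Q_e$ to $X_u$ and $X_v$ is built directly into the construction via the boundary shortest paths, so no further argument is needed.
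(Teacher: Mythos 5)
Your construction follows essentially the same blueprint as the paper's proof: branch sets are balls/stubs of radius roughly $K$ around each branch vertex (the paper takes $X_v := N_G^{c(c+1)}[v]$), connectors are the middles of the subdivision paths, and both are pushed into $G'$ via $f$ and then thickened to become connected (the paper uses $N_{G'}^c[\cdot]$, you concatenate $G'$-geodesics; both guarantee that every point of the thickened set lies within $G'$-distance $c$ of some $f$-image). Your separation principle --- thickenings of $f(S)$ and $f(T)$ cannot meet once $\dist_G(S,T) > 3c^2$ --- is the right one, though it comes from the lower (left-hand) bound in \ref{QI1} rearranged, not the ``right-hand inequality''.

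The genuine gap is the unresolved calibration of $K$, which you yourself flag as ``the heart of the technical work'' and do not carry out. As you set it up you need $2K > 3c^2$ and $n - 2K > 3c^2$ simultaneously, i.e.\ $n > 6c^2$; but $n \approx 4c(c+1) = 4c^2+4c$, so the window is empty once $c \geq 2$. The ``tighter bookkeeping'' you hope for is not available within this framework: $3c^2$ is exactly the threshold that \ref{QI1} yields, and no sharper $G$-distance bound follows from the hypotheses alone. So the sketch does not close with the stated constants, and deferring the calibration is a fatal rather than cosmetic omission. (Worth noting: the paper's own proof hits the same wall --- it picks $r=c(c+1)$, which equalises both constraints at $2c(c+1)$, and then asserts $2c(c+1)/c - c = 2c+1$; the correct value is $c+2$, which fails to separate $N_{G'}^c[f(X_v)]$ from $N_{G'}^c[f(X_{v'})]$ once $c \geq 2$. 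A subdivision parameter on the order of $6c^2$ rather than $4c^2+4c$ would make both your and the paper's calculations go through.)
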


    \begin{proof}
        Let $f:V(G)\mapsto V(G')$ be a $c$-quasi-isometric embedding. Note that adjacent vertices in $H$ are at distance at least $4c(c+1)$ in $G$.
        
        For each $v\in V(H)$, let $X_v:=N_G^{c(c+1)}[v]$. For each pair of distinct $v,v'\in V(H)$, observe that $\dist_G(X_v,X_{v'})\geq 4c(c+1)-2c(c+1)=2c(c+1)$. 
        
        For each $vv'\in E(H)$, let $P_{vv'}$ be the vertices of the unique path in $G$ between $X_v$ and $X_{v'}$ with no internal vertices in $V(H)$. Note that $P_{vv'}$ intersects $X_v$ and $X_{v'}$, and that $P_{vv'}$ is at distance exactly $c(c+1)$ from both $v$ and $v'$. Thus, observe that for distinct $vv',ww'\in E(H)$, $\dist_G(P_{vv'},P_{ww'})\geq 2c(c+1)$. Also, note that for each $vv'\in E(H)$ and each $w\in V(H)$ that is not an endpoint of $ww'$, $\dist_G(P_{vv'},X_w)\geq 2c(c+1)$.

        For each $v\in V(H)$, let $X'_v:=N_{G'}^c[f(X_v)]$. Since $G[X_v]$ is connected and since adjacent vertices are mapped to vertices at distance at most $2c$ by \ref{QI1}, $G'[X'_v]$ is connected. Further, for each pair of distinct $v,v'\in V(H)$, by \ref{QI1}, observe that $\dist_{G'}(f(X_v),f(X_{v'}))\geq 2c(c+1)/c-c=2c+1$, and thus $\dist_{G'}(X_v',X_{v'}')\geq 2c+1-2c=1$. In particular, $X'_v$ and $X'_{v'}$ are disjoint.
        
        For each $vv'\in E(H)$, let $P'_{vv'}:=N_{G'}^c[f(P_{vv'})]$. Using the same argument as above, $G'[P'_{vv'}]$ is connected, and for each pair of distinct $vv',ww'\in E(H)$, $P'_{vv'}$ and $P'_{ww'}$ are disjoint. Similarly, for each $vv'\in E(H)$ and each $w\in V(H)$ that is not an endpoint of $vv'$, $P'_{vv'}$ and $X'_w$ are disjoint. Finally, note that, for each $vv'\in E(H)$, $P'_{vv'}$ intersects both $X'_v$ and $X'_{v'}$.

        Thus, we have sets $\{X'_v:v\in V(H)\}$, $\{P'_{vv'}:vv'\in E(H)\}$ that a) all induce connected subgraphs of $G'$ and b) intersect if and only if they form a incident edge/endpoint vertex pair. It follows from an observation made at the beginning of \cref{secLB} that $H$ is a minor of $G'$.
    \end{proof}

    \begin{corollary}
        For every real number $c\geq 1$ and each integer $n\geq 1$, each $\geq (4c(c+1)-1)$-subdivision of $K_n$ is not $c$-quasi-isometrically embeddable into any graph of treewidth less than $n-1$.
    \end{corollary}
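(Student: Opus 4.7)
The plan is to deduce the corollary directly from \cref{qiMinor} combined with two classical facts about treewidth: treewidth is minor-monotone, and $K_n$ has treewidth exactly $n-1$. Concretely, I would argue by contradiction.

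Suppose for contradiction that there exist a real $c \geq 1$, an integer $n \geq 1$, a $\geq(4c(c+1)-1)$-subdivision $G$ of $K_n$, and a graph $G'$ of treewidth less than $n-1$ such that $G$ is $c$-quasi-isometrically embeddable into $G'$. Applying \cref{qiMinor} with $H := K_n$ yields that $G'$ contains $K_n$ as a minor. Since treewidth is minor-monotone (any tree-decomposition of $G'$ of width $w$ induces, via contracting the branch sets $\{X_v\}$ and $\{P_e\}$ from the minor model within each bag, a tree-decomposition of $K_n$ of width at most $w$), and since $K_n$ has treewidth exactly $n-1$, we deduce that $G'$ has treewidth at least $n-1$. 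This contradicts the assumption that $G'$ has treewidth less than $n-1$, completing the proof.

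The whole argument is essentially a one-line invocation of \cref{qiMinor}; the only subtleties are (i) checking the two classical facts about treewidth are indeed available, and (ii) handling the edge case $n=1$, for which the statement is vacuous (no graph has treewidth less than $0$). No step here is expected to be a real obstacle, since the heavy lifting was already carried out in \cref{qiMinor}.
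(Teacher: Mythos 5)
Your proof is correct and matches the paper's argument essentially verbatim: both invoke \cref{qiMinor} to extract a $K_n$ minor, then use minor-monotonicity of treewidth and the fact that $K_n$ has treewidth $n-1$. The only cosmetic difference is that you phrase it as a contradiction while the paper argues directly.
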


    \begin{proof}
        Let $G$ be a $\geq (4c(c+1)-1)$-subdivision of $K_n$, and let $G'$ be a graph that $G$ $c$-quasi-isometrically embeds into. By \cref{qiMinor}, $G'$ has a $K_n$-minor. Since $K_n$ has treewidth $n-1$, and since treewidth does not increase under taking minor, $G'$ has treewidth at least $n-1$. The corollary follows.
    \end{proof}

    We can now complete the proof of \cref{lowerBound}.

    \begin{proof}
        Let $G$ be the $(4c(c+1)-1)$-subdivision of $K_{k-2}$. Since $k-2\geq 4$ as $k\geq 6$, by \cref{cwKn}, $G$ has cliquewidth at most $k$. By \cref{qiMinor} and since all $c$-quasi-isometries are also $c$-quasi-isometric embeddings, any graph that $G$ is $c$-quasi-isometric to has treewidth at least $k-3$.
    \end{proof}

    \section{Assouad--Nagata dimension}
    \label{secANdim}

    This section is devoted to proving \cref{ANdimCw}. We start with some definitions.

    For $r\in \ds{R}$ and a graph $G$, $S,T\subseteq V(G)$ are \defn{$r$-disjoint} if for all $u\in S$, $v\in T$, $\dist_G(u,v)>r$. A collection $\scr{C}$ of subsets of $V(G)$ is \defn{$r$-disjoint} if the sets in $\scr{C}$ are pairwise $r$-disjoint.

    A function $f:\ds{R}\rightarrow \ds{R}$ is a \defn{dilation} if there exists $c\in \ds{R}$ such that $f(r)=cr$ for all $r\in \ds{R}$. $f$ is \defn{increasing} if, for all $x,y\in \ds{R}$ with $x>y$, $f(x)>f(y)$.

    For an integer $n\geq 0$, a function $d:\ds{R}\rightarrow \ds{R}$ is an \defn{$n$-dimensional control function} for a class of graphs $\scr{G}$ if, for every $G\in \scr{G}$ and every $r\in \ds{R}$, there exist $n+1$ collections $\scr{C}_1,\dots,\scr{C}_{n+1}$ of subsets of $V(G)$ such that:

    \begin{enumerate}[label=(CF\arabic*)]
        \item \label{CF1}$\bigcup_{i=1}^{n+1}\bigcup_{S\in \scr{C}_i}S = V(G)$,
        \item \label{CF2}for each $i\in \{1,\dots,n+1\}$, $\scr{C}_i$ is $r$-disjoint, and
        \item \label{CF3} for each $i\in \{1,\dots,n+1\}$ and each $S\in \scr{C}_i$, $S$ has weak diameter in $G$ at most $d(r)$.
    \end{enumerate}

    The \defn{Assouad--Nagata} dimension of a class of graphs $\scr{G}$, \defn{$\ANdim(\scr{G})$}, is the minimum $n$ such that there exists a dilation that is an $n$-dimensional control function for $\scr{G}$ (or $\infty$ if no such $n$ exists).

    We can now start the proof of \cref{ANdimCw}. The key idea is that quasi-isometric embeddings can be used to bound the Assouad--Nagata dimension, as shown with the following lemma (which we suspect is well-known, but include a proof for completeness).

    \begin{lemma}
        \label{QIANdim}
        Let $c\in \ds{R}$, and let $\scr{G},\scr{H}$ be classes of graphs such that $\scr{G}$ is $c$-quasi-isometrically embeddable in $\scr{H}$. Then $\ANdim(\scr{G})\leq \ANdim(\scr{H})$.
    \end{lemma}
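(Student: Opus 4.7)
The plan is to lift a control function for $\scr{H}$ back to $\scr{G}$ via the quasi-isometric embedding. We may assume $\ANdim(\scr{H})=n<\infty$ with a dilation $d(r)=\alpha r$ as witness, since otherwise the inequality is trivial. Given $G\in\scr{G}$, fix a $c$-quasi-isometric embedding $f\colon V(G)\to V(H)$ for some $H\in\scr{H}$. For a scale $r\geq 1$, apply the hypothesis that $d$ is an $n$-dimensional control function for $\scr{H}$ to the graph $H$ at the enlarged scale $r':=cr+c$, obtaining collections $\scr{D}_1,\dots,\scr{D}_{n+1}$ of subsets of $V(H)$, and set $\scr{C}_i:=\{f^{-1}(S):S\in\scr{D}_i,\,f^{-1}(S)\neq\emptyset\}$.

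I would then verify the three control-function conditions for $\scr{C}_1,\dots,\scr{C}_{n+1}$. Covering (\ref{CF1}) is immediate since $\bigcup_i\bigcup_{S\in\scr{D}_i}S=V(H)\supseteq f(V(G))$. For $r$-disjointness (\ref{CF2}), I would use the contrapositive of the upper bound in \ref{QI1}: if $u,v\in V(G)$ lie in distinct sets of $\scr{C}_i$, then $f(u),f(v)$ lie in distinct sets of $\scr{D}_i$, so $\dist_H(f(u),f(v))>cr+c$, which forces $\dist_G(u,v)>r$. For weak diameter (\ref{CF3}), the lower bound in \ref{QI1} gives, for any $u,v$ in a common $f^{-1}(S)\in\scr{C}_i$, the estimate $\dist_G(u,v)\leq c\bigl(\dist_H(f(u),f(v))+c\bigr)\leq c\alpha(cr+c)+c^2=\alpha c^2 r+\alpha c^2+c^2$.

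The main obstacle is that this bound on the weak diameter in $G$ is affine in $r$, whereas a witness for Assouad--Nagata dimension must be an exact dilation. I would resolve this by handling small scales separately: for $0\leq r<1$, taking $\scr{C}_1$ to be the collection of all singletons of $V(G)$ (and $\scr{C}_2,\dots,\scr{C}_{n+1}$ empty) trivially satisfies all three conditions, since distinct vertices of $G$ are at graph distance at least $1>r$. For $r\geq 1$, the affine bound $\alpha c^2 r+\alpha c^2+c^2$ is dominated by $(2\alpha c^2+c^2)r$. Hence $d'(r):=(2\alpha c^2+c^2)r$ is a dilation that is an $n$-dimensional control function for $\scr{G}$ at every scale, proving $\ANdim(\scr{G})\leq n=\ANdim(\scr{H})$.
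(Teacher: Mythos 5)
Your proposal is correct and takes essentially the same approach as the paper: pull back a witnessing control function for $\scr{H}$ through $f^{-1}$ at the enlarged scale $r' := cr+c$, verify the three conditions via \ref{QI1}, and handle the scale $r<1$ separately by partitioning into singletons so that the resulting control function can be taken to be an exact dilation. Even the final constant $(2\alpha c^2 + c^2)$ matches the paper's $d(r) := cd'(2cr)+c^2r$ once $d'$ is written as $r\mapsto\alpha r$; the only cosmetic difference is that you state the dilation coefficient explicitly while the paper expresses it as a composition with $d'$.
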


    \begin{proof}
        Let $n:=\ANdim(\scr{H})$. We may assume that $n<\infty$ as otherwise the lemma is trivially true. So there exists an $n$-dimensional control function $d'$ for $\scr{H}$ that is a dilation. Without loss of generality, we may assume that $d'$ is increasing, and that $d'(r)\geq 0$ for each $r\in \ds{R}$. Let $d:=r\mapsto cd'(2cr)+c^2r$. Observe that since $d'$ is a dilation, so is $d$. Thus, it suffices to show that $d$ is an $n$-dimensional control function for $\scr{G}$.

        Fix $r\in \ds{R}^+$ and $G\in \scr{G}$. Observe that if $r<1$, any pair of disjoint sets of vertices are $r$-disjoint. It follows that if $\scr{P}$ is the partition of $G$ into singletons, then $\{\scr{P}\}$ satisfies \ref{CF1}-\ref{CF3} (as $d'(r)\geq 0$). So we may assume $r\geq 1$. 
        
        Set $r':=cr+c$. Since $r\geq 1$ and since $d'$ is increasing, observe that $cd'(r')+c^2\leq d(r)$.
        
        Since $\scr{G}$ is $c$-quasi-isometrically embeddable in $\scr{H}$, there exist $H\in \scr{H}$ and a function $f:V(G)\mapsto V(H)$ that is a $c$-quasi-isometric embedding from $G$ into $H$.
        By definition of $d'$, there exist $n+1$ collections $\scr{C}'_1,\dots,\scr{C}'_{n+1}$ of subsets of $V(H)$ such that:
        \begin{enumerate}[label=(CF\arabic*-H)]
            \item \label{CFH1} $\bigcup_{i=1}^{n+1}\bigcup_{S\in \scr{C}'_i}S = V(H)$,
            \item \label{CFH2} for each $i\in \{1,\dots,n+1\}$, $\scr{C}'_i$ is $r'$-disjoint in $H$, and
            \item \label{CFH3} for each $i\in \{1,\dots,n+1\}$ and each $S\in \scr{C}'_i$, $S$ has weak diameter in $H$ at most $d'(r')$.
        \end{enumerate}

        For each $i\in \{1,\dots,n+1\}$, let $\scr{C}_i:=\{f^{-1}(S):S\in \scr{C}'_i\}$. Observe that \ref{CF1} is satisfied immediately by \ref{CFH1}. Next, using \ref{QI1} with $f$ and by \ref{CFH2}, $\scr{C}_i$ is $r'/c-1=r$-disjoint in $G$. So \ref{CF2} is satisfied. Similarly, by \ref{QI1} and \ref{CFH3}, $f^{-1}(S)$ has weak diameter in $G$ at most $cd'(r')+c^2\leq d(r)$. Thus, \ref{CF3} is satisfied, and the lemma holds.
    \end{proof}

    This can be combined with \cref{ANdimTw} to prove \cref{ANdimCw}.

    \begin{proof}[Proof of \cref{ANdimCw}]
        Note that \cref{QIcwtw} implies that the class of all graphs of cliquewidth at most $k$ is $3$-quasi-isometrically embeddable in the class of all graphs of treewidth at most $k-1$, as a $3$-quasi-isometry is also a $3$-quasi-isometric embedding. Thus, the upper bound follows from \cref{QIcwtw,QIANdim,ANdimTw}. The lower bound follows from the (easy to prove) fact that any class containing graphs of arbitrarily large diameter have Assouad--Nagata dimension at least 1, and the fact that any path has cliquewidth at most 3 by \cref{cwPathRed}. 
    \end{proof}

    We remark that each connected component of a graph of cliquewidth at most $2$ has diameter at most $3$ (as it can be checked that $P_4$ has cliquewidth exactly $3$). So the class of all graphs of cliquewidth at most $2$ has Assouad--Nagata dimension 0.

    \paragraph{Acknowledgements.} The author thanks David Wood for his supervision and suggestions to improve this paper. The author thanks Robert Hickingbotham, Itai Benjamini, and Robert Brignall for comments on the original arXiv version of this paper.

    \bibliography{Ref.bib}

\end{document}